\documentclass[a4paper, 11pt, twoside, notitlepage]{amsart}

\usepackage[utf8]{inputenc}
\usepackage{color}
\usepackage{amsmath} 
\usepackage{amssymb} 
\usepackage{amsthm}
\usepackage{geometry}
\usepackage{graphicx}
\usepackage{esint}
\usepackage[colorlinks=true,linkcolor=blue]{hyperref}

\theoremstyle{plain}
\newtheorem{thm}{Theorem}
\newtheorem*{thm*}{Theorem}
\newtheorem{prop}{Proposition}[section]
\newtheorem{lem}[prop]{Lemma}
\newtheorem{cor}[prop]{Corollary}

\newtheorem{defi}[prop]{Definition}
\newtheorem{rmk}[prop]{Remark}

\newcommand {\R} {\mathbb{R}} 
 \newcommand {\N} {\mathbb{N}}
\newcommand {\C} {\mathbb{C}} 
\newcommand {\p} {\partial}

\newcommand {\D} {\Delta}

\newcommand {\supp} {\text{supp}}

\newcommand{\ov}{\overline{\varphi}}

\newcommand{\ou}{\overline{u}}

\newcommand{\mR}{\mathbb{R}}                    % Formatting for R
                    % Formatting for C
                    % Formatting for Z
                    % Formatting for N
\newcommand{\abs}[1]{\lvert #1 \rvert}          % Formatting for the absolute value
\newcommand{\norm}[1]{\lVert #1 \rVert}         % Formatting for the norm
         % Formatting for the inner product

\DeclareMathOperator {\dist} {dist}

\DeclareMathOperator{\F} {\mathcal{F}}
\DeclareMathOperator{\Je} {\mathcal{J}_{\epsilon,h,s}}

\DeclareMathOperator{\Jes} {\mathcal{J}_{\epsilon,h,s,\delta}}

\pagestyle{headings}

\title[Quantitative Approximation]{Quantitative Approximation Properties for the Fractional Heat Equation}

\author[A. R\"uland]{Angkana R\"uland}
\address{Mathematical Institute, University of Oxford, Andrew Wiles Building, Radcliffe Observatory Quarter, Woodstock Road, Oxford OX2 6GG}
\email{ruland@maths.ox.ac.uk}

\author[M. Salo]{Mikko Salo}
\address{Department of Mathematics and Statistics, University of Jyv\"askyl\"a}
\email{mikko.j.salo@jyu.fi}

\begin{document}

\maketitle

\begin{abstract}
In this note we analyse \emph{quantitative} approximation properties of a certain class of \emph{nonlocal} equations: Viewing the fractional heat equation as a model problem, which involves both \emph{local} and \emph{nonlocal} pseudodifferential operators,
we study quantitative approximation properties of solutions to it. First, relying on Runge type arguments, we give an alternative proof of certain \emph{qualitative} approximation results from \cite{DSV16}. Using propagation of smallness arguments, we then provide bounds on the \emph{cost} of approximate controllability and thus quantify the approximation properties of solutions to the fractional heat equation. Finally, we discuss generalizations of these results to a larger class of operators involving both local and nonlocal contributions.
\end{abstract}

\section{Introduction}

This article is dedicated to \emph{qualitative} and \emph{quantitative} approximation properties of solutions to certain mixed \emph{local-nonlocal} equations. As a model problem, we consider the heat equation for the fractional Laplacian with $s\in(0,1)$,
\begin{equation}
\label{eq:eq_main}
\begin{split}
(\p_t +(-\D)^{s}) u &= 0 \mbox{ in } B_1 \times (-1,1),\\
 u & = f \mbox{ in } (\R^n \setminus \overline{B}_1) \times (-1,1) ,\\
 u & = f \hspace{0pt} \mbox{ in } \R^n \times \{-1\},
\end{split}
\end{equation}
and study the \emph{quantitative} approximation properties of the mapping 
\begin{align}
\label{eq:map}
L^2((-1,1), C^{\infty}_c(W)) \ni f \mapsto P_sf=u|_{B_1 \times (-1,1)} \in L^2(B_1 \times (-1,1)).
\end{align}
Here $W\subset \R^n$ is an open, bounded Lipschitz set, such that
$\overline{W}\cap \overline{B}_1 = \emptyset$. The precise definition of the solution map $P_s$ for the problem \eqref{eq:eq_main} is given in Section \ref{sec:qual}.

Due to the work of Dipierro, Savin and Valdinoci \cite{DSV16} (c.f.\ also \cite{DSV14}), it is known that the mapping $P_s$ has a dense image (even in suitable H\"older spaces). More precisely, the authors show the following result:

\begin{thm*}[\cite{DSV16}, Theorem 1]
Let $B_1 \subset \R^n$ be the unit ball, $s\in (0,1)$, $k\in \N$ and $h:B_1 \times (-1,1) \rightarrow \R$ with $h\in C^k(\overline{B}_1 \times [-1,1])$. Fix $\epsilon>0$. Then there exists $u_{\epsilon}=u\in C^{\infty}(B_1 \times (-1,1))\cap C(\R^{n+1})$ which is compactly supported in $\R^{n+1}$ and such that the following properties hold true:
\begin{align*}
&\p_t u + (-\D)^s u  = 0 \mbox{ in } B_1 \times (-1,1),\\
& \mbox{ and } \|u-h\|_{C^k(B_1 \times (-1,1))} \leq \epsilon.
\end{align*} 
\end{thm*}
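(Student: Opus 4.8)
The plan is to deduce the density statement from a \emph{Runge-type approximation theorem} for the mapping \eqref{eq:map}, and to obtain the latter by a Hahn--Banach duality argument that reduces everything to a \emph{unique continuation property} for the backward (adjoint) fractional heat equation; this unique continuation property in turn follows, time slice by time slice, from the weak unique continuation property of the fractional Laplacian. Concretely, I would first fix an auxiliary open, bounded, Lipschitz set $W$ with $\overline{W}\cap\overline{B}_1=\emptyset$ and, for exterior data $f\in L^2((-1,1);C_c^\infty(W))$, consider the weak solution $u=u_f$ of \eqref{eq:eq_main}, defined through the usual Galerkin/semigroup theory for $(-\Delta)^s$ with an exterior Dirichlet condition, so that $P_sf=u_f|_{B_1\times(-1,1)}$ is well defined and $u_f$ is smooth inside $B_1\times(-1,1)$ by interior regularity for the fractional heat equation. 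The core claim is that the range of $P_s$ is dense in $L^2(B_1\times(-1,1))$; upgrading this to the $C^k$-statement and arranging the stated regularity and support of $u_\epsilon$ is a separate, more technical step, addressed at the end.

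Suppose, towards a contradiction, that the range of $P_s$ is not dense. By Hahn--Banach there is $0\neq v\in L^2(B_1\times(-1,1))$ with $\int_{B_1\times(-1,1)}v\,P_sf=0$ for all admissible $f$. Let $\tilde v$ denote the extension of $v$ by zero to $\R^n\times(-1,1)$ and let $w$ solve the backward exterior-value problem
\[
(-\partial_t+(-\Delta)^s)w=\tilde v \text{ in } B_1\times(-1,1),\qquad w=0 \text{ in } (\R^n\setminus B_1)\times(-1,1),\qquad w(\cdot,1)=0,
\]
which, after reversing time, is a standard forward fractional heat equation with zero initial and exterior data and an $L^2$ source, hence well posed with $w\in L^2((-1,1);H^s(\R^n))\cap C([-1,1];L^2(\R^n))$. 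Testing this equation against $u_f$, integrating by parts in $x$ (using self-adjointness of $(-\Delta)^s$ on $\R^n$ and that $w\equiv0$ off $B_1$) and in $t$ (the temporal boundary term at $t=1$ vanishes since $w(\cdot,1)=0$, the one at $t=-1$ since the initial datum of $u_f$ vanishes on $B_1$), and using that $u_f$ is caloric in $B_1$, one arrives at
\[
0=\int_{B_1\times(-1,1)}v\,u_f=-\int_{(\R^n\setminus B_1)\times(-1,1)}(-\Delta)^sw\;u_f=-\int_{W\times(-1,1)}(-\Delta)^sw\;f
\]
for every $f\in L^2((-1,1);C_c^\infty(W))$; therefore $(-\Delta)^sw=0$ in $W\times(-1,1)$, while also $w=0$ in $W\times(-1,1)$ since $W\subset\R^n\setminus B_1$.

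For a.e.\ fixed $t$ we thus have $w(\cdot,t)\in H^s(\R^n)$ with $w(\cdot,t)=0$ and $(-\Delta)^sw(\cdot,t)=0$ on the open set $W$; the weak unique continuation property of the fractional Laplacian then forces $w(\cdot,t)\equiv0$ on $\R^n$, so $w\equiv0$, and plugging back into the adjoint equation gives $v\equiv0$, a contradiction. This establishes density of the range of $P_s$ in $L^2(B_1\times(-1,1))$. To recover the precise $C^k$-formulation, I would run the same duality argument on a slightly enlarged cylinder $B_{1+\rho}\times(-1,1)$ (still with $\overline{W}\cap\overline{B}_{1+\rho}=\emptyset$) and in the dual of $C^k$, so that the adjoint solution solves an equation with a compactly supported distributional source situated away from $W$ — the unique continuation step being unchanged — and then invoke interior parabolic (Schauder-type) estimates for the fractional heat equation to promote $L^2$-closeness on the larger cylinder to $C^k$-closeness on $B_1\times(-1,1)$; a final mollification-and-cut-off exploiting the spatial decay of $u$ produces an approximant that is caloric in $B_1\times(-1,1)$, continuous on $\R^{n+1}$ and compactly supported, at the price of an arbitrarily small additional error.

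The only genuinely nonlocal ingredient — and the step I expect to be the main obstacle — is the unique continuation property used above: that the vanishing of $w$ and of $(-\Delta)^sw$ on an exterior open set forces $w\equiv0$. For the model problem \eqref{eq:eq_main} this can be taken off the shelf (applied for a.e.\ fixed time), so that the remaining work is essentially bookkeeping: a clean weak formulation and well-posedness theory for the forward and adjoint exterior-value problems, and the interior-regularity and cut-off arguments needed for the $C^k$-upgrade. For the more general mixed local--nonlocal operators considered later in the paper, establishing the corresponding unique continuation statement is the substantive difficulty.
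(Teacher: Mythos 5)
Your proposal follows essentially the same route as the paper's own (alternative) proof of this result, namely Theorem \ref{prop:approx_qual}: Runge-type density via Hahn--Banach, the backward adjoint problem \eqref{eq:dual}, the duality identity $(P_sf,v)_{L^2(B_1\times(-1,1))}=-(f,(-\Delta)^s\varphi)_{L^2(W\times(-1,1))}$, and weak unique continuation for $(-\Delta)^s$ applied on each time slice to conclude $v\equiv 0$. Like the paper, you carry out the $L^2$ version in full and only sketch the upgrade to $C^k$ approximation and the construction of a globally continuous, compactly supported approximant (larger cylinder, interior regularity, cut-off), which the paper likewise does not pursue and instead defers to \cite{GSU16} and \cite{DSV16}.
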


Moreover, the results in \cite{DSV16} show that the approximation properties of \eqref{eq:eq_main} are determined by the nonlocal part of the operator. In particular, in the framework of \cite{DSV16} parabolicity is not needed, it would for instance also be possible to consider wave type operators.

While showing the density of the image of the mapping \eqref{eq:map}, the argument in \cite{DSV16} does not quantify the \emph{cost} of approximating a given function $h\in L^2(B_1 \times (-1,1))$. In this note we address this question: 
\begin{itemize}
\item[Q:]
\emph{Given an error threshold $\epsilon>0$ and a function $h\in L^2(B_1 \times (-1,1))$, how large is the value of a suitable norm of a possible control function $f$, which is such that $P_sf$ approximates $h$ up to the error threshold $\epsilon$?}
\end{itemize}
These quantitative arguments 
were partly motivated by stability results in inverse problems for
nonlocal operators, c.f. \cite{RS17}, and can also be considered as a continuation of the investigation started in \cite{Rue17}.
In the context of the model problem \eqref{eq:eq_main} our main result can be formulated as the following proposition:

\begin{thm}[Cost of approximation]
\label{prop:cost}
Let $h\in H^1_0(B_1 \times (-1,1))$ and $\epsilon>0$. Let $W \subset \R^n \setminus \overline{B}_1$ be a Lipschitz domain with $\overline{W} \cap \overline{B}_1 = \emptyset$.
Then there exists a control function $f \in L^2((-1,1), C^{\infty}_c(W))$ such that
\begin{equation}
\label{eq:approx_cost}
\begin{split}
&\|h-P_s f\|_{L^2(B_1 \times (-1,1))} \leq \epsilon,\\ 
&\|f\|_{L^2(W \times (-1,1))}\leq C e^{C (1+\|h\|_{H^1(B_1 \times (-1,1))}^{\sigma})\epsilon^{-\sigma}}\|h\|_{H^1(B_1 \times (-1,1))},
\end{split}
\end{equation}
where the constants $C > 1$ and $\sigma > 0$ only depend on $n$, $s$, and $W$. 
Moreover, we note that $f$ can be expressed in terms of the minimizer of a suitable ``energy" (more precisely of the functional \eqref{eq:functional}).  
\end{thm}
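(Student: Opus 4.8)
The plan is to follow the standard duality/observability route to quantitative approximate controllability, made quantitative via a propagation of smallness estimate for the adjoint (backward) fractional heat equation.

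First I would set up the functional-analytic framework. Since $P_s$ is linear and bounded from $L^2((-1,1),C^\infty_c(W))$ (with the $L^2(W\times(-1,1))$ norm) into $L^2(B_1\times(-1,1))$, I would compute its adjoint $P_s^*$. By the weak formulation of \eqref{eq:eq_main}, testing against a solution $v$ of the \emph{adjoint} problem
\[
(-\p_t + (-\D)^s)v = g \ \text{ in } B_1\times(-1,1), \qquad v = 0 \ \text{ in } (\R^n\setminus \ol{B}_1)\times(-1,1), \qquad v = 0 \ \text{ on } \R^n\times\{1\},
\]
with $g = h - P_s f$, one finds that $P_s^* g = (-\D)^s v|_{W\times(-1,1)}$ (the nonlocal interaction term through the exterior region $W$, up to sign and the usual nonlocal boundary computation). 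Then approximate controllability is equivalent to the unique continuation statement: if $v$ solves the adjoint equation with zero lateral exterior data and zero final data, and $(-\D)^s v = 0$ on $W\times(-1,1)$, then $v\equiv 0$. This qualitative statement is exactly the content of the Runge-type / Hahn–Banach argument underlying the qualitative approximation result reproved earlier in the paper, so I would invoke that.

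Next comes the quantitative step, which is the heart of the matter. Instead of a pure Hahn–Banach argument I would introduce the penalized functional
\begin{equation*}
\label{eq:functional}
J_\epsilon(g) = \tfrac12 \norm{P_s^* g}_{L^2(W\times(-1,1))}^2 + \epsilon \norm{g}_{(H^1_0)^*} - \br{h, g},
\end{equation*}
on the relevant dual space, and let $g_\epsilon$ be its minimizer (this is where the ``energy'' in the statement comes from); the control is then $f = P_s^* g_\epsilon$, which automatically satisfies the approximation bound $\norm{h - P_s f}\le \epsilon$ by the Euler–Lagrange inequality. The size of $f$ is controlled by $\norm{g_\epsilon}$, and to bound the latter one needs a \emph{quantitative} observability/unique continuation inequality of the form
\[
\norm{g}_{(H^1_0)^*} \le F\!\left(\norm{P_s^* g}_{L^2(W\times(-1,1))}\right)\norm{g}_{L^2}
\]
with $F$ an explicit modulus of continuity (logarithmic, reflecting the strong ill-posedness). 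This is obtained by a propagation of smallness argument: one extends $v$ harmonically in an extra variable à la Caffarelli–Silvestre to turn the nonlocal condition $(-\D)^s v = 0$ on $W$ into a local Cauchy-data smallness condition on a piece of the boundary, then chains three-balls / doubling inequalities (in space) together with the analyticity/smoothing in time of the heat semigroup to propagate smallness from $W\times(-1,1)$ into $B_1\times(-1,1)$. Plugging the resulting logarithmic stability estimate into the minimization of $J_\epsilon$ and optimizing yields the claimed bound $\norm{f}\le C e^{C(1+\norm{h}_{H^1}^\sigma)\epsilon^{-\sigma}}\norm{h}_{H^1}$.

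The main obstacle is precisely this propagation of smallness estimate for the mixed local–nonlocal parabolic operator: one must handle simultaneously the Caffarelli–Silvestre extension variable, the spatial unique continuation across $\p B_1$, and the time direction, keeping track of how constants degenerate, and one must do so in a way compatible with the $H^1$-regularity of $h$ rather than smoother data (which is why the $H^1_0$ norm and its dual appear, and why the extra factor $\norm{h}_{H^1}^\sigma$ enters the exponent). A secondary technical point is making the duality $P_s^* g = (-\D)^s v|_W$ rigorous given the limited regularity of $v$ up to $\p B_1$ and the need for $f$ to actually lie in $L^2((-1,1),C^\infty_c(W))$, which requires an additional mollification/cutoff argument inside $W$ that must be shown not to destroy the estimates.
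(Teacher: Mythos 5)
Your proposal follows the same overall strategy as the paper — duality with the backward adjoint problem \eqref{eq:dual}, the Fern\'andez-Cara--Zuazua penalized functional whose minimizer yields the control, and propagation of smallness for the adjoint solution via three-balls chains in the Caffarelli--Silvestre extension — so the skeleton is right; but the quantitative step is organized differently in two ways that matter if you try to execute the plan literally. First, you propose to prove a standalone logarithmic observability inequality of the form $\norm{g}_{(H^1_0)^*}\le F\bigl(\norm{P_s^*g}_{L^2(W\times(-1,1))}\bigr)\norm{g}_{L^2}$ by propagating smallness ``into $B_1\times(-1,1)$''. The paper never propagates up to the boundary slice $B_1\times\{x_{n+1}=0\}$: Proposition \ref{prop:small_s} only controls $\ov$ and $x_{n+1}^{1-2s}\p_{n+1}\ov$ on $B_1\times\{\delta\}$, with a H\"older exponent $\mu\delta^{\sigma}$ that degenerates as $\delta\to 0$, and the gap between heights $\delta$ and $0$ is transferred onto $h$ by integration by parts — this is precisely where $h\in H^1_0$ is used, producing an error $C\delta^{\min\{s,1-s\}}\norm{h}_{H^1}\norm{v}_{L^2}$ that forces the choice $\delta\sim(\epsilon/\norm{h}_{H^1})^{1/\max\{s,1-s\}}$ and hence the exponential cost. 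Your inequality is morally a repackaging of this (dualize against $h$ and optimize in $\delta$), but no fixed-exponent H\"older propagation up to $B_1\times\{0\}$ is available, so the ``logarithmic $F$'' must be derived exactly through this $\delta$-optimization rather than assumed. Second, you invoke ``analyticity/smoothing in time of the heat semigroup''; the paper uses no parabolic regularity whatsoever in the unique continuation step — time is a pure parameter in the spatial three-balls chain, and the parabolic structure enters only through the global energy estimate \eqref{eq:global} (see Remark \ref{rmk:quant}); this is what makes the argument portable to the fractional wave equation in Section \ref{sec:extend}, and relying on time-analyticity would both complicate and unnecessarily restrict the proof. Two smaller points: the paper's penalty is $\epsilon\norm{v}_{L^2(B_1\times(-1,1))}$ rather than a dual $H^1_0$ norm, which via the Euler--Lagrange inequality gives exactly the $L^2$ approximation bound claimed; and the membership $f\in L^2((-1,1),C^{\infty}_c(W))$ needs no mollification — one builds the cutoff $\eta\in C^{\infty}_c(W)$ into the functional from the start and takes $f=-\eta^2(-\D)^s\hat\varphi$, which is smooth in $x$ on $W$ by Lemma \ref{lem:zero}.
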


Let us comment on this result: In the model setting of the heat equation for the fractional Laplacian it quantifies an $L^2$ version of the result from \cite{DSV16}. The condition that $h$ vanishes on the boundary does not pose serious restrictions compared to the result of \cite{DSV16}, as this can always be achieved after a suitable extension. Indeed, it is always possible to reduce to the situation where $h \in H^1_0$ by considering the control problem in a slightly larger Lipschitz domain $\Omega \times (-2,2) \subset B_2 \times (-2,2)$ (where $\Omega$ is adapted to the geometry of $B_1$ and $W$) and by extending the given function $h\in H^1(B_1 \times (-1,1))$ to a function $\tilde{h} \in H^1_0(\Omega \times (-2,2))$ with the properties that $\tilde{h}|_{B_1 \times (-1,1)}= h$ and
\begin{align*}
\|\tilde{h}\|_{H^1(\Omega \times (-2,2))} \leq C(W,B_1)\|h\|_{H^1(B_1 \times (-1,1))}.
\end{align*}
Considering an analogue of \eqref{eq:eq_main} and Theorem \ref{prop:cost} in $\Omega \times (-2,2) $ then implies the $L^2$ version of the approximation result from \cite{DSV16} for the fractional heat equation.

Regarding the dependences on $\epsilon$ and $h$ in the estimate \eqref{eq:approx_cost} in Theorem \ref{prop:cost}, we expect that the exponential dependence on $\epsilon>0$ is indeed necessary. Although it is natural that higher order norms of $h$ appear in the estimate, we do not believe that the norms, which are used in \eqref{eq:approx_cost}, are optimal. Yet we hope that the ideas introduced here are robust enough to be extended to a number of other problems in which both local and nonlocal operators are involved. A number of further operators for which these ideas are applicable are discussed in Section \ref{sec:extend}. 

Similarly as in \cite{Rue17}, our approach to the question on the cost of control relies on
\begin{itemize}
\item[(i)] a propagation of smallness result, 
\item[(ii)] quantitative unique continuation properties of the adjoint equation \eqref{eq:dual}, 
\item[(iii)] the variational technique from \cite{FZ00},
\item[(iv)] and on a global estimate for solutions to \eqref{eq:dual} (c.f. equation \eqref{eq:global}). 
\end{itemize}
As in the qualitative density result, it is the underlying \emph{nonlocal} operator, whose properties we mainly exploit (c.f. ingredients (i)-(iii)). The parabolic character of the problem only enters by invoking global estimates. It is therefore possible to extend this result to a much richer class of local-nonlocal operators (c.f. Section \ref{sec:extend}).

The remainder of the article is structured as follows: In Section \ref{sec:qual} we first discuss the qualitative approximation properties of the fractional heat equation. This is based on Runge type approximation arguments. Next, in Section \ref{sec:quant}, we address the quantitative uniqueness properties for the fractional heat equation with $s\in(0,1)$. Here we rely on propagation of smallness estimates. In Section \ref{sec:proof_main} we introduce a variational approach to the approximation problem and prove Theorem \ref{prop:cost}. Finally, in Section \ref{sec:extend}, we explain how to extend the presented arguments to more general (variable coefficient) local-nonlocal operators.

\subsection*{Acknowledgements}

A.R.\ gratefully acknowledges a Junior Research Fellowship at Christ Church. M.S.\ was supported by the Academy of Finland (Finnish Centre of Excellence in Inverse Problems Research, grant number 284715) and an ERC Starting Grant (grant number 307023). Both authors would like to thank Herbert Koch for commenting on a preliminary version of Corollary \ref{cor:frac1} and suggesting a simplification in the argument.

\section{Qualitative Approximation and Weak Unique Continuation}
\label{sec:qual}
In this section, we discuss the \emph{qualitative} approximation properties of the mapping \eqref{eq:map}.
As the main result, we recover an $L^2$ version of certain approximation properties identified in \cite{DSV16}. Instead of relying on boundary asymptotics of the problem, we however use Runge type approximations as introduced in \cite{GSU16} (c.f.\ also \cite{L56}, \cite{B62}, \cite{B62a} for similar ideas in the setting of different local equations). In principle this could be upgraded to (stronger) approximation properties in H\"older spaces (c.f. Section 6 in \cite{GSU16} and \cite{DSV16}). As we are however mainly interested in the \emph{quantitative} approximation properties outlined in the next section, we do not pursue this here.

\subsection{Notation and well-posedness}
As in \cite{GSU16} and \cite{RS17} we will mainly use energy spaces. To that end, we recall that for $s\in \R$
\begin{align*}
&\widetilde{H}^s(B_1):= \mbox{ closure of $C_c^{\infty}(B_1)$ in $H^s(\R^n)$},\\
&H^s(B_1):= \{u|_{B_1}: u \in H^s(\R^n)\},
\end{align*}
and that 
\begin{align*}
(\widetilde{H}^s(B_1))^{\ast}= H^{-s}(B_1).
\end{align*}
We denote the corresponding homogeneous spaces by adding a dot to these spaces, e.g.\ $\dot{H}^s(\R^n)$.
As we are working with a time dependent problem, we will also use the corresponding Bochner spaces, which are associated with the energy spaces of our equations.

Having introduced the previous notation, we discuss the well-posedness of equations as in \eqref{eq:eq_main}. Here we restrict our attention to standard regularity assertions in the energy space as this suffices for our purposes. For more refined results in e.g. H\"older spaces we refer to \cite{FK13}, \cite{KS13}. We remark that the operator $(-\Delta)^s$ is always understood to act in the variable $x \in \R^n$.

\begin{lem}
\label{lem:well-posedness}
Let $n\geq 1$ and $s\in (0,1)$. Then for any $F\in L^2((-1,1),H^{-s}(B_1))$ and any $f\in L^2((-1,1), H^s(\R^n))$ with $f|_{B_1 \times (-1,1)} = 0$, there exists a unique function $u = f + v$, where $v \in L^2((-1,1),\tilde{H}^{s}(B_1))\cap C([-1,1],L^2(\R^n))$, satisfying
\begin{equation}
\label{eq:weaktwo}
\begin{split}
(\p_t +(-\D)^{s}) u &= F \mbox{ in } B_1 \times (-1,1),\\
 u & = f \mbox{ in } (\R^n \setminus \overline{B}_1) \times (-1,1) ,\\
 v & = 0 \hspace{5pt} \mbox{ in } \R^n \times \{-1\}.
\end{split}
\end{equation}
Moreover,
\begin{align*}
&\|u(t)\|_{L^2(B_1)} + \|u\|_{L^2((-1,1), H^s(\R^n))}
+ \|u'\|_{L^2((-1,1), H^{-s}(B_1))}\\
& \leq C(\|F\|_{L^2((-1,1), H^{-s}(B_1))} + \|f\|_{L^2((-1,1), H^s(\R^n))}).
\end{align*}
\end{lem}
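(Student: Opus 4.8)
The plan is to prove this well-posedness result by the standard Galerkin/energy method after reducing the inhomogeneous exterior condition to a homogeneous one. First I would set $u = f + v$, so that $v$ must solve the equation
\begin{equation*}
(\p_t + (-\D)^s) v = F - (\p_t + (-\D)^s) f \quad \text{in } B_1 \times (-1,1),
\end{equation*}
with $v = 0$ in $(\R^n \setminus \overline{B}_1) \times (-1,1)$ and $v(-1) = 0$. The point is that the right-hand side $\tilde F := F - \p_t f - (-\D)^s f$ lies in $L^2((-1,1), H^{-s}(B_1))$: indeed $F$ is assumed to be there, and since $f \in L^2((-1,1), H^s(\R^n))$ we have $(-\D)^s f \in L^2((-1,1), H^{-s}(\R^n))$, whose restriction to $B_1$ is controlled; the term $\p_t f$ should be interpreted distributionally and paired against $\widetilde H^s(B_1)$-valued test functions, using that $f|_{B_1 \times (-1,1)} = 0$ so the $\p_t f$ contribution to the weak formulation in fact vanishes when tested against $\widetilde H^s(B_1)$ functions (its support in $x$ avoids $B_1$). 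So effectively $v$ solves a homogeneous-lateral-data fractional heat equation with source in $L^2((-1,1), H^{-s}(B_1))$ and zero initial data.

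Next I would give the weak formulation: $v \in L^2((-1,1), \widetilde H^s(B_1))$ with $v' \in L^2((-1,1), H^{-s}(B_1))$ is a solution if
\begin{equation*}
\br{v'(t), \varphi} + \br{(-\D)^{s/2} v(t), (-\D)^{s/2}\varphi}_{L^2(\R^n)} = \br{\tilde F(t), \varphi}
\end{equation*}
for all $\varphi \in \widetilde H^s(B_1)$ and a.e.\ $t$, together with $v(-1) = 0$. Existence and uniqueness then follow from the Lions–Lax–Milgram theorem for parabolic problems (or a Galerkin approximation using an orthonormal basis of $\widetilde H^s(B_1)$ consisting of Dirichlet eigenfunctions of $(-\D)^s$ on $B_1$): the bilinear form $a(u,\varphi) = \br{(-\D)^{s/2}u, (-\D)^{s/2}\varphi}_{L^2(\R^n)}$ is bounded and coercive on $\widetilde H^s(B_1)$ (coercivity on the ball follows from the fractional Poincaré inequality, so $a(u,u) \gtrsim \|u\|_{H^s(\R^n)}^2$ for $u \in \widetilde H^s(B_1)$), which yields the standard energy estimate. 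The regularity $v \in C([-1,1], L^2(\R^n))$ comes from the embedding $L^2((-1,1),\widetilde H^s) \cap H^1((-1,1), H^{-s}) \hookrightarrow C([-1,1], L^2)$.

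For the quantitative bound I would test the equation with $\varphi = v(t)$, obtaining $\tfrac12 \tfrac{d}{dt}\|v(t)\|_{L^2}^2 + c\|v(t)\|_{H^s(\R^n)}^2 \le \|\tilde F(t)\|_{H^{-s}(B_1)}\|v(t)\|_{\widetilde H^s(B_1)}$; absorbing via Young's inequality and integrating from $-1$ gives $\sup_t \|v(t)\|_{L^2}^2 + \|v\|_{L^2((-1,1),H^s(\R^n))}^2 \lesssim \|\tilde F\|_{L^2((-1,1),H^{-s}(B_1))}^2$. Then $\|v'\|_{L^2((-1,1),H^{-s}(B_1))}$ is bounded by reading it off the equation: $v' = \tilde F - (-\D)^s v$ as elements of $H^{-s}(B_1)$. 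Finally, translating back to $u = f + v$ and using $\|\tilde F\|_{L^2((-1,1),H^{-s}(B_1))} \lesssim \|F\|_{L^2((-1,1),H^{-s}(B_1))} + \|f\|_{L^2((-1,1),H^s(\R^n))}$, together with the trivial $\|u(t)\|_{L^2(B_1)} \le \|v(t)\|_{L^2(\R^n)} + \|f(t)\|_{L^2(B_1)}$ (the second term being zero since $f|_{B_1 \times(-1,1)}=0$), yields the stated estimate.

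The main obstacle I anticipate is the careful handling of the $\p_t f$ term and, more generally, making the weak formulation precise enough that the reduction to homogeneous lateral data is rigorous: one must check that nothing is lost when pairing the equation with $\widetilde H^s(B_1)$-valued test functions, in particular that $\p_t f$ and $(-\D)^s f$ are well-defined in the appropriate dual space and that the restriction-to-$B_1$ operations interact correctly with the time derivative. This is mostly bookkeeping, but it is where the hypothesis $f|_{B_1 \times (-1,1)} = 0$ is genuinely used. Everything else is the standard parabolic energy argument, transplanted from the classical Laplacian to $(-\D)^s$ with the fractional Poincaré inequality replacing the usual one.
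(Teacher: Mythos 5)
Your proposal is correct and follows essentially the same route as the paper: reduce to homogeneous lateral data via $u = f+v$ (the $\partial_t f$ term disappearing because $f$ vanishes on $B_1$), derive the energy estimate by testing with $v$ and using a fractional Poincar\'e/Hardy--Littlewood--Sobolev bound for coercivity on $\widetilde{H}^s(B_1)$, obtain existence by Galerkin approximation with the Dirichlet eigenbasis of $(-\Delta)^s$ on $B_1$, and read off the $\|v'\|_{L^2 H^{-s}}$ bound from the equation. No substantive differences.
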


\begin{rmk}
\label{rmk:weak_sol}
We refer to the function $u$ as a weak solution of \eqref{eq:weaktwo}. Note also that changing $t$ to $-t$, we obtain an analogous solvability result for the problem 
\begin{equation}
\label{eq:weaktwo_dual}
\begin{split}
(-\p_t +(-\D)^{s}) u &= F \mbox{ in } B_1 \times (-1,1),\\
 u & = f \mbox{ in } (\R^n \setminus \overline{B}_1) \times (-1,1) ,\\
 v & = 0 \hspace{5pt} \mbox{ in } \R^n \times \{1\}.
\end{split}
\end{equation}
\end{rmk}

\begin{proof}
We first note that writing $v=u-f$ and invoking the support assumption for $f$, the problem reduces to finding $v$ solving 
\begin{equation}
\label{eq:weakthree}
\begin{split}
(\p_t +(-\D)^{s}) v &= \tilde{F} \mbox{ in } B_1 \times (-1,1),\\
 v & = 0 \hspace{5pt} \mbox{ in } \R^n \times \{-1\}, \\
v &\in L^2((-1,1),\tilde{H}^{s}(B_1))\cap C([-1,1],L^2(\R^n)),
\end{split}
\end{equation}
where $\tilde{F} := F - (-\Delta)^s f$ is another function in $L^2((-1,1), H^{-s}(B_1))$. Now if $v$ is such a function solving \eqref{eq:weakthree}, then multiplying the equation by $v$, integrating over $(-1,t) \times \R^n$, and using that $v(-1) = 0$ gives the initial estimate 
\[
\frac{1}{2} \norm{v(t)}_{L^2(B_1)}^2 + \norm{v}_{L^2((-1,1), \dot{H}^s(\R^n))}^2 \leq \norm{\tilde{F}}_{L^2((-1,1), H^{-s}(B_1))} \norm{v}_{L^2((-1,1), \tilde{H}^s(B_1))}.
\]
The Hardy-Littlewood-Sobolev inequality gives $\norm{w}_{L^2(B_1)} \leq C \norm{w}_{L^{\frac{2n}{n-2s}}} \leq C_{n,s} \norm{w}_{\dot{H}^s}$ for $w \in \tilde{H}^s(B_1)$ (if $n=1$ and $s \geq 1/2$, one can interpolate the easy $L^2 \to L^2$ and $\dot{H}^1 \to L^{\infty}$ bounds). Using this and Young's inequality yields that 
\[
\norm{v(t)}_{L^2(B_1)} + \norm{v}_{L^2((-1,1), H^s(\R^n))} \leq C_{n,s} \norm{\tilde{F}}_{L^2((-1,1), H^{-s}(B_1))},
\]
and using the equation once more implies the energy estimate 
\begin{multline}
\sup_{t \in (-1,1)} \norm{v(t)}_{L^2(B_1)} + \norm{v}_{L^2((-1,1), H^s(\R^n))} + \norm{\partial_t v}_{L^2((-1,1), H^{-s}(B_1))} \\
 \leq C_{n,s} \norm{\tilde{F}}_{L^2((-1,1), H^{-s}(B_1))} \label{heat_energy_basic}
\end{multline}
for solutions of \eqref{eq:weakthree}.

Now \eqref{heat_energy_basic} implies uniqueness as well as norm estimates for a solution $u = f + v$ of \eqref{eq:weaktwo}, using the triangle inequality and the support assumption for $f$. Hence, it remains to discuss existence of solutions.
This follows from a Galerkin approximation. 
To that end, we consider an eigenbasis $\{\varphi_k\}_{k=1}^{\infty}$ associated with the Dirichlet fractional Laplacian in $B_1$, i.e. 
\begin{align*}
(-\D)^s \varphi_k &= \lambda_k \varphi_k \mbox{ in } B_1,\\
\varphi_k & = 0 \mbox{ in } \R^n \setminus \overline{B}_1.
\end{align*}
We normalize these eigenfunctions so that they form an orthonormal basis of $\tilde{H}^s(B_1)$ and an orthogonal basis of $L^2(B_1)$. Thus, writing $\alpha_k(t) = (v(t), \varphi_k)_{L^2(B_1)}$, testing the equation \eqref{eq:weakthree} with $\varphi_k$, and requiring $\alpha_k(-1)=0$ results in the ODE 
\begin{align*}
\alpha_k'(t) + \lambda_k \alpha_k(t) &= \tilde{F}_{k}(t) \mbox{ for } t \in (-1,1),\\
\alpha_k(-1)& =0 ,
\end{align*}
where $\tilde{F}_{k}(t) := \tilde{F}(t)(\varphi_k)$. If $\alpha_k(t)$ solve these ODE, we define 
\[
v_N(x,t):= \sum\limits_{k=1}^{N} \alpha_k(t)\varphi_k(x).
\]
This function solves \eqref{eq:weakthree} with $\tilde{F}$ replaced by $\tilde{F}_N := \sum_{k=1}^N \tilde{F}_k(t) \varphi_k(x)$. Since $\{\varphi_k\}$ is an orthonormal basis of $\tilde{H}^s(B_1)$, functions of the form $w(t,x)=\sum_{k=1}^M w_k(t) \varphi_k(x)$ are dense in the space $L^2((-1,1), \tilde{H}^s(B_1))$. Consequently $\norm{\tilde{F}_N}_{L^2((-1,1), H^{-s}(B_1))} \leq \norm{\tilde{F}}_{L^2((-1,1), H^{-s}(B_1))}$ for each $N$, and the energy estimate \eqref{heat_energy_basic} applied to $v_N$ yields 
\[
\sup_{t \in (-1,1)} \norm{v_N(t)}_{L^2(B_1)} + \norm{v_N}_{L^2((-1,1), H^s(\R^n))} + \norm{\partial_t v_N}_{L^2((-1,1), H^{-s}(B_1))} \leq C.
\]
This yields enough compactness to extract a weak limit $v$ as $N\rightarrow \infty$. Testing the equation for $v$ with functions of the form $w(t,x)=\sum_{k=1}^M w_k(t) \varphi_k(x)$, which form a dense set, we obtain a solution $v$ to \eqref{eq:weakthree} satisfying the desired a priori bounds.
\end{proof}

For later reference, we also note the following spatial higher regularity result:

\begin{lem}
\label{lem:zero}
Let $u$ be a weak solution to \eqref{eq:weaktwo} or \eqref{eq:weaktwo_dual} with $f=0$ and $F \in L^2(B_1 \times (-1,1))$. Assume that $W \subset \R^n$ is a Lipschitz set with $\overline{W} \cap \overline{B}_1=\emptyset$. 
Then, for any $r\geq 0$
\begin{align*}
\|(-\D)^s u\|_{L^2((-1,1),H^r(W))}
\leq C \|F\|_{L^2(B_1 \times (-1,1))}.
\end{align*}
\end{lem}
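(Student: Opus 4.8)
The plan is to exploit the fact that, away from $B_1$, the function $u$ solves a \emph{nonlocal} equation with no source term, and that the operator $(-\D)^s$ applied to $u$ on $W$ can be written entirely in terms of the values of $u$ on $B_1$ (and, a priori, on $\R^n\setminus\overline B_1$), which are controlled by the basic energy estimate \eqref{heat_energy_basic}. More precisely, since $f=0$, Lemma \ref{lem:well-posedness} already gives $\|u\|_{L^2((-1,1),H^s(\R^n))}+\|u'\|_{L^2((-1,1),H^{-s}(B_1))}\le C\|F\|_{L^2(B_1\times(-1,1))}$, so it suffices to upgrade the \emph{spatial} regularity of $(-\D)^s u$ on the set $W$, which is at positive distance from $B_1$ where the only singularity of the kernel lives.

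First I would use the integral representation
\[
(-\D)^s u(x,t)=c_{n,s}\,\mathrm{p.v.}\int_{\R^n}\frac{u(x,t)-u(y,t)}{|x-y|^{n+2s}}\,dy .
\]
For $x\in W$, split the integral into the contribution from $B_1$ and the contribution from $\R^n\setminus\overline B_1$. Because $\dist(W,B_1)=:d>0$, the kernel $k(x,y)=|x-y|^{-n-2s}$ restricted to $x\in W$, $y\in B_1$ is smooth and all its $x$-derivatives are bounded, so $x\mapsto \int_{B_1}\frac{u(y,t)}{|x-y|^{n+2s}}\,dy$ is $C^\infty$ on a neighborhood of $\overline W$ with every $H^r(W)$ norm bounded by $C_{r,d}\|u(\cdot,t)\|_{L^2(B_1)}$. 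The remaining pieces — the term $u(x,t)\int_{B_1}|x-y|^{-n-2s}\,dy$ and the genuinely nonlocal operator acting on $u(\cdot,t)$ restricted to $\R^n\setminus\overline B_1$ — are handled as follows. The first is a smooth ($C^\infty$ on $\overline W$, uniformly) function times $u(x,t)$; here I would pair it with the fact that on $W$ the restriction of $u(\cdot,t)\in H^s(\R^n)$ need only be estimated in $H^s(W)$, and — this is the crux — I would instead re-derive the desired bound by using the \emph{equation} rather than brute regularity: on $(\R^n\setminus\overline B_1)\times(-1,1)$ with $f=0$ we have $u=0$ on the exterior, so actually only the term involving $u|_{B_1}$ and the diagonal term survive, and on $W$ itself $u\equiv 0$ when $f=0$. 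Wait — since $f=0$, by definition $u=v$ and $v=0$ on $\R^n\setminus\overline B_1$, hence for $x\in W\subset\R^n\setminus\overline B_1$ we simply have
\[
(-\D)^s u(x,t)= -c_{n,s}\int_{B_1}\frac{u(y,t)}{|x-y|^{n+2s}}\,dy,
\]
a clean statement: $(-\D)^s u$ on $W$ is the integral against a smooth-in-$x$, uniformly-in-$t$ kernel of the $L^2(B_1)$ function $u(\cdot,t)$.

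It then remains to differentiate under the integral sign: for any multi-index $\beta$ with $|\beta|\le r$,
\[
\p_x^\beta (-\D)^s u(x,t)=-c_{n,s}\int_{B_1}\p_x^\beta\!\big(|x-y|^{-n-2s}\big)\,u(y,t)\,dy,
\]
and $\sup_{x\in W,\,y\in B_1}|\p_x^\beta(|x-y|^{-n-2s})|\le C(n,s,r,d)$ because $|x-y|\ge d$. Cauchy--Schwarz in $y$ and then $L^2$-integration in $t$ give
\[
\|(-\D)^s u\|_{L^2((-1,1),H^r(W))}\le C\,\|u\|_{L^2((-1,1),L^2(B_1))}\le C\,\|F\|_{L^2(B_1\times(-1,1))},
\]
where the last inequality is the energy bound of Lemma \ref{lem:well-posedness} (applied to \eqref{eq:weaktwo} or, after the time reversal of Remark \ref{rmk:weak_sol}, to \eqref{eq:weaktwo_dual}). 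The only point requiring a little care is the justification of differentiation under the integral for the principal-value operator; but since $x$ ranges over $W$ at distance $d>0$ from $\supp\big(u(\cdot,t)\big)\subset\overline B_1$, the integrand is not singular and no principal value is needed, so dominated convergence applies directly. Thus the main "obstacle" is essentially bookkeeping — isolating that for $f=0$ the exterior values vanish on $W$, after which the estimate is an elementary consequence of the smoothness of the fractional kernel off the diagonal.
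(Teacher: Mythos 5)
Your proposal is correct and is essentially the same argument as the paper's: since $u$ vanishes on $\R^n\setminus \overline{B}_1$ (as $f=0$), on $W$ the operator reduces to integration of $u(\cdot,t)|_{B_1}$ against the kernel $|x-y|^{-n-2s}$, which is smooth and bounded with all derivatives for $x\in W$, $y\in B_1$, and the conclusion follows from the energy estimate of Lemma \ref{lem:well-posedness}. The mid-proof detour about splitting off exterior contributions is unnecessary (as you yourself notice), but the final argument is the intended one.
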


\begin{proof}
By Lemma \ref{lem:well-posedness} we have $(-\D)^s u \in L^2((-1,1),H^{-s}(\R^n))$. Since for $x\in W $ the assumption that $u(x,t)=0$ implies that
\begin{align*}
(-\D)^s u (x,t) 
= \mathrm{p.v.}\ c_{s,n} \int\limits_{\R^n} \frac{u(x,t)-u(y,t)}{|x-y|^{n+2s}} \,dy 
=  - \mathrm{p.v.}\ c_{s,n} \int\limits_{\R^n} \frac{u(y,t)}{|x-y|^{n+2s}} \,dy,
\end{align*}
we have for $k \geq 0$ that $\norm{(-\D)^s u}_{L^2((-1,1), H^k(W))} \leq C_k \norm{u}_{L^2((-1,1) \times \R^n)}$. The claimed estimate follows from the $L^2((-1,1), H^{s}(\R^n))$ bound in Lemma \ref{lem:well-posedness}. 
\end{proof}

\subsection{Qualitative approximation}
We next approach the qualitative density properties of the fractional heat equation. By means of a duality argument as in \cite{GSU16} this is reduced to
unique continuation properties of the fractional Laplacian. 

\begin{thm}
\label{prop:approx_qual}
Let $s\in (0,1)$ and consider the operator $P_s $ from \eqref{eq:map}.
Assume that $W \subset \R^n$ is a Lipschitz set with $\overline{W}\cap \overline{B}_1 = \emptyset$.
Define 
\begin{align*}
\mathcal{R}:=\{u|_{B_1 \times (-1,1)}: u = P_s f, \ f \in C_c^{\infty}(W \times (-1,1))\}.
\end{align*}
Then the set $\mathcal{R}$ is dense in $L^2(B_1 \times (-1,1))$.
\end{thm}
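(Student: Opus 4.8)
The plan is to prove Theorem~\ref{prop:approx_qual} by a standard duality (Hahn--Banach) argument, reducing the density of $\mathcal{R}$ in $L^2(B_1\times(-1,1))$ to a unique continuation statement for the adjoint (backward) fractional heat equation. Concretely, suppose $g\in L^2(B_1\times(-1,1))$ annihilates $\mathcal{R}$, i.e. $(g,P_sf)_{L^2(B_1\times(-1,1))}=0$ for every $f\in C_c^\infty(W\times(-1,1))$; we must show $g=0$. To exploit this, let $\varphi$ solve the adjoint problem
\begin{equation*}
\begin{split}
(-\p_t +(-\D)^{s}) \varphi &= g \mbox{ in } B_1 \times (-1,1),\\
 \varphi & = 0 \mbox{ in } (\R^n \setminus \overline{B}_1) \times (-1,1) ,\\
 \varphi & = 0 \hspace{5pt} \mbox{ in } \R^n \times \{1\},
\end{split}
\end{equation*}
which is well posed by Lemma~\ref{lem:well-posedness} together with Remark~\ref{rmk:weak_sol} (here $g$ is extended by zero outside $B_1$, so it lies in $L^2((-1,1),H^{-s}(B_1))$).

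The main computation is an integration by parts pairing the equation for $u=P_sf$ (solving $(\p_t+(-\D)^s)u=0$ in $B_1\times(-1,1)$, with exterior data $f$ and zero initial data) against $\varphi$. Since $u$ and $\varphi$ satisfy the initial/terminal conditions at $t=-1$ and $t=1$ respectively, the time-derivative boundary terms vanish, and using the symmetry of $(-\D)^s$ on $\R^n$ one obtains
\[
(g,u)_{L^2(B_1\times(-1,1))} = \int_{-1}^{1}\!\!\int_{\R^n\setminus\overline{B}_1} f\,(-\D)^s\varphi \,dx\,dt.
\]
Thus the assumption $(g,P_sf)=0$ for all $f\in C_c^\infty(W\times(-1,1))$ forces $(-\D)^s\varphi=0$ in $W\times(-1,1)$. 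Combined with $\varphi=0$ in $(\R^n\setminus\overline{B}_1)\times(-1,1)$, we have, for a.e. fixed $t$, that $\varphi(\cdot,t)$ vanishes in $W$ and $(-\D)^s\varphi(\cdot,t)$ vanishes in $W$, with $W$ open. The weak unique continuation principle for the fractional Laplacian (from \cite{GSU16}) then yields $\varphi(\cdot,t)\equiv 0$ in $\R^n$ for a.e. $t$, hence $\varphi\equiv 0$, and therefore $g=(-\p_t+(-\D)^s)\varphi=0$. By Hahn--Banach this gives the claimed density.

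I would expect the routine-but-delicate step to be the justification of the integration by parts at the level of weak solutions: the solutions $u$ and $\varphi$ live only in $L^2((-1,1),H^s(\R^n))$ with $\p_t$ in the dual space $L^2((-1,1),H^{-s}(B_1))$, so the pairing $\int \p_t u\,\varphi$ must be interpreted via the $C([-1,1],L^2(\R^n))$ representatives and the fundamental theorem of calculus for such Bochner spaces, and one should either argue by density using the Galerkin approximants $v_N$ from the proof of Lemma~\ref{lem:well-posedness} or quote a standard transposition identity. The genuinely substantive input is the weak unique continuation property of $(-\D)^s$, which we invoke as a black box from \cite{GSU16}; everything else is bookkeeping. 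One minor point to handle with care is measurability in $t$ when applying the elliptic unique continuation slice-by-slice, but since $\varphi\in L^2((-1,1),\widetilde{H}^s(B_1))$ this is unproblematic.
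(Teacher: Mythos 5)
Your proposal is correct and follows essentially the same route as the paper: Hahn--Banach reduction to the annihilator, the adjoint problem \eqref{eq:dual}, the integration-by-parts identity \eqref{ps_adjoint} yielding $\varphi = (-\D)^s\varphi = 0$ on $W\times(-1,1)$, and the slicewise weak unique continuation result of \cite{GSU16}. (Your displayed adjoint identity differs from \eqref{ps_adjoint} by a sign, which is immaterial here, and your remarks on justifying the weak-solution pairing and slicewise measurability correctly identify the only points the paper leaves implicit.)
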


\begin{rmk}
We emphasize that the choice of the spatial domain $B_1$ is not essential in our argument. It is for instance possible to consider more general, bounded Lipschitz domains.
\end{rmk}

\begin{proof}
By the Hahn-Banach theorem, it is enough to show that if $v \in L^2(B_1 \times (-1,1))$ satisfies 
\[
(P_s f, v)_{L^2(B_1 \times (-1,1))} = 0 \qquad \text{for all $f \in C^{\infty}_c(W \times (-1,1))$},
\]
then $v \equiv 0$. Now let $v$ be such a function. We consider the dual problem to \eqref{eq:eq_main}. It is given by
\begin{equation}
\label{eq:dual}
\begin{split}
(-\p_t +(-\D)^s) \varphi &=  v \mbox{ in } B_1 \times (-1,1),\\
 \varphi & = 0 \mbox{ in } (\R^n \setminus \overline{B}_1) \times (-1,1),\\
 \varphi & = 0 \mbox{ in } \R^n \times \{1\}.
\end{split}
\end{equation}
We note that by virtue of Lemma \ref{lem:well-posedness} and Remark \ref{rmk:weak_sol}, both \eqref{eq:eq_main} and \eqref{eq:dual} are well-posed.

Let now $f \in C^{\infty}_c(W \times (-1,1))$, let $u$ solve \eqref{eq:eq_main}, and let $\varphi$ solve \eqref{eq:dual}. Since $u-f$ vanishes outside $\overline{B}_1 \times (-1,1)$, it follows that 
\begin{equation} \label{ps_adjoint}
\begin{split}
(P_s f, v)_{L^2(B_1 \times (-1,1))} &= (u-f, (-\p_t + (-\D)^s)\varphi)_{L^2(\R^n \times (-1,1))}\\
&=  -( f, (-\D)^s \varphi)_{L^2(W \times (-1,1))}.
\end{split}
\end{equation}
In the last equality we used that $u$ is a solution, that $u(-1) = \varphi(1) = 0$, and the support conditions on $\varphi$ and $f$.

Since $(P_s f, v)_{L^2(B_1 \times (-1,1))} = 0$ for all $\varphi \in C^{\infty}_c(W \times (-1,1))$, the above computation yields that 
\begin{align*}
\varphi =
(-\D)^s \varphi = 0 \mbox{ in } W \times (-1,1). 
\end{align*}
By weak unique continuation for the fractional Laplacian (for each fixed time slice), see e.g.\ \cite[Theorem 1.2]{GSU16}, this implies that $\varphi(\cdot,t) = 0$ in $\R^n \times \{t\}$ for all $t\in (-1,1)$ and hence $v=0$. By the Hahn-Banach theorem this thus yields the desired density property.
\end{proof}

\begin{rmk}
The adjoint property \eqref{ps_adjoint} can also be inferred using the Caffarelli-Silvestre extension, see Section \ref{sec:quant}.
Denoting the Caffarelli-Silvestre extension associated with $\varphi(x,t)$ by $\overline{\varphi}(x,x_{n+1},t)$ and using the notation from \eqref{eq:Neumann}, the equation \eqref{eq:dual} can be formulated as 
\begin{equation}
\label{eq:harm_extend}
\begin{split}
(\p_{n+1} x_{n+1}^{1-2s}\p_{n+1} + x_{n+1}^{1-2s}\D')\overline{\varphi} &= 0 \mbox{ in } \R^{n+1}_+ \times (-1,1),\\
(\p_{n+1}^s-\p_t) \overline{\varphi} &= v \mbox{ in } B_1 \times \{0\}\times (-1,1),\\
\overline{\varphi} & = 0 \mbox{ in } (\R^n \setminus \overline{B}_1) \times \{0\}\times (-1,1),\\
\overline{\varphi} & = 0 \mbox{ in } \R^{n}\times \{0\}\times \{1\}.
\end{split}
\end{equation}
With this notation, we then have
\begin{equation}
\label{eq:HB}
\begin{split}
&(v,P_s f)_{L^2(B_1 \times (-1,1))}
= ((-\p_t + (-\D)^s)\varphi, P_s f)_{L^2(B_1 \times (-1,1))}\\
&= (\varphi, \p_t u)_{L^2(B_1 \times (-1,1))}
+ (\p_{n+1}^s\ov,  \ou)_{L^2(\R^n \times \{0\}\times (-1,1))} 
-  (\p_{n+1}^s\ov,  \ou)_{L^2(W \times \{0\}\times (-1,1))}\\
&= (\varphi, \p_t u)_{L^2(B_1 \times (-1,1))}
+ (\ov,  \p_{n+1}^s \ou)_{L^2(\R^n \times \{0\}\times (-1,1))} 
-  (\p_{n+1}^s\ov,  \ou)_{L^2(W \times \{0\}\times (-1,1))}\\
&= (\varphi, \p_t u)_{L^2(B_1 \times (-1,1))}
+ (\varphi,  (-\D)^s u)_{L^2(B_1\times (-1,1))} 
- ((-\D)^s \varphi,  f)_{L^2(W \times (-1,1))}\\
& = -((-\D)^s \varphi,  f)_{L^2(W \times (-1,1))}.
\end{split}
\end{equation}
Here we first integrated by parts in time, then used that $\ov$ and $\ou$ are solutions to the Caffarelli-Silvestre extension for each fixed time slice and finally exploited that $u$ obeys \eqref{eq:eq_main}. 
\end{rmk}

\begin{rmk}
\label{rmk:loc_nonloc}
The argument of Theorem \ref{prop:approx_qual} shows that also in the case, in which a \emph{local} operator is combined with a \emph{nonlocal} operator, the density properties of $\mathcal{R}$ are purely determined by the nonlocal component of the operator: The \emph{local} part of the operator does not play a role in the reduction to the weak unique continuation principle and only the weak unique continuation properties of the underlying \emph{nonlocal} operator are of relevance.

In particular, this implies that as in \cite{DSV16} the parabolic character of the problem at hand was not essential in the qualitative density argument. The same strategy can be pursued for more general operators, e.g.\ the fractional wave equation (c.f.\ Section \ref{sec:extend}).
\end{rmk}

In analogy to the notation from in control theory we use the following convention in the sequel:
 
\begin{defi}
\label{defi:control} 
Let $P_s$ for $s\in (0,1)$ be as in \eqref{eq:map}.
Given a function $h\in L^2(B_1 \times (-1,1))$ and an error threshold $\epsilon>0$, we refer to a function $f_{\epsilon,h}$, which satisfies
\begin{align*}
\|h-P_s f_{\epsilon,h}\|_{L^2(B_1 \times (-1,1))}< \epsilon,
\end{align*}
as a \emph{control function for $h$ with error threshold $\epsilon>0$}. If there is no danger of confusion, we also simply refer to it as a \emph{control}.
\end{defi}

\section{Propagation of smallness}
\label{sec:quant}

With the qualitative behaviour from the previous section at hand, we now proceed to \emph{quantitative} aspects of these approximation results. We begin our analysis by deducing a central propagation of smallness property, which quantifies the weak unique continuation result used in Section \ref{sec:qual} and  provides the basis for the proof of Theorem \ref{prop:cost}. This result is stated in terms of the Caffarelli-Silvestre extension (c.f. \cite{CS07}), which we now recall.

By virtue of \cite{CS07} it is possible to realize the nonlocal operator $(-\D)^s$ with $s\in(0,1)$ as a local operator by adding an additional dimension: Given a function $v\in L^2(\R^n)$, and writing $x = (x',x_{n+1}) \in \R^{n+1}$, we have that for some constant $c_s \in \R \setminus \{0\}$ 
\begin{align}
\label{eq:Neumann}
(-\D)^s v(x') = \p_{n+1}^s \overline{v}(x'):= c_s\lim\limits_{x_{n+1}\rightarrow 0} x_{n+1}^{1-2s}\p_{n+1}\overline{v}(x',x_{n+1}),
\end{align}
where the function $\bar{v}$ is a solution to
\begin{align*}
\nabla \cdot x_{n+1}^{1-2s} \nabla \overline{v} &= 0 \mbox{ in } \R^{n+1}_+,\\
\overline{v} &= v \mbox{ on } \R^n \times \{0\}.
\end{align*}
Here $\nabla=(\p_1,\dots, \p_{n+1})^t$ denotes the full gradient in $n+1$ dimensions (i.e.\ in the tangential and normal directions). If convenient, we also abbreviate the tangential part of it by $\nabla'$. In the sequel, we will use the convention that for a function $v \in L^2(\R^n)$ we denote its Caffarelli-Silvestre extension into $\R^{n+1}_+$ by $\overline{v}$.

\begin{prop}
\label{prop:small_s} 
Let $n \geq 1$, $s\in(0,1)$, and let $W \subset \R^n$ be a bounded Lipschitz domain with $\overline{B}_1 \cap \overline{W} = \emptyset$. There exist constants $C_j \geq 1$, $\mu_j \in (0,1)$, and $\sigma_j > 0$, only depending on $n$, $s$, and $W$, such that whenever $v\in L^2(B_1 \times (-1,1))$, $\varphi$ is the solution of \eqref{eq:dual} associated with $v$, and $\delta \in (0,1/2)$, one has 
\begin{align*}
\|\ov\|_{L^2(B_1  \times \{\delta\}\times (-1,1) )} &\leq C_1 \delta^{s-1} \norm{\partial_{n+1}^s \ov}_{L^2(W \times \{0\} \times (-1,1))}^{\mu_1 \delta^{\sigma_1}} \| v\|_{L^2(B_1 \times (-1,1))}^{1-\mu_1 \delta^{\sigma_1}}, \\
\|x_{n+1}^{1-2s} \partial_{n+1} \ov\|_{L^2(B_1  \times \{\delta\}\times (-1,1) )} &\leq C_2 \delta^{-s} \norm{\partial_{n+1}^s \ov}_{L^2(W \times \{0\} \times (-1,1))}^{\mu_2 \delta^{\sigma_2}} \| v\|_{L^2(B_1 \times (-1,1))}^{1-\mu_2 \delta^{\sigma_2}}.
\end{align*}
\end{prop}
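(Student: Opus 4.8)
The plan is to reduce the propagation of smallness for the time-dependent fractional heat equation to a quantitative unique continuation (three-balls / three-spheres) estimate for the elliptic Caffarelli-Silvestre extension, applied slice-by-slice in time and then integrated. Concretely, write $\ov(x', x_{n+1}, t)$ for the Caffarelli-Silvestre extension of $\varphi(\cdot, t)$, which for each fixed $t$ solves the degenerate elliptic equation $\nabla \cdot (x_{n+1}^{1-2s} \nabla \ov) = 0$ in $\R^{n+1}_+$, and on the slice $\R^n \times \{0\}$ satisfies $\ov = 0$ on $W$ and $\p_{n+1}^s \ov = \p_t \varphi - v$ there (from \eqref{eq:dual} rewritten via \eqref{eq:harm_extend}). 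The vanishing of the Dirichlet data on $W$ together with the Neumann data $\p_{n+1}^s \ov|_{W}$ allows one to do an even/odd reflection of $\ov$ across $\{x_{n+1}=0\}$ over the set $W$, yielding a function solving a degenerate elliptic equation with an inhomogeneity controlled by $\|\p_{n+1}^s \ov\|_{L^2(W\times(-1,1))}$, to which an Almgren-frequency-function or Carleman-type three-balls inequality applies. This is exactly the mechanism used in \cite{GSU16} and \cite{Rue17}; the novelty here is tracking the quantitative dependence on the transversal parameter $\delta$.

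The key steps, in order: (1) Fix $t$ and set up the reflected extension problem across $W$, recording that the small Cauchy data on $W\times\{0\}$ is measured by $A := \|\p_{n+1}^s \ov\|_{L^2(W\times\{0\}\times(-1,1))}$ while the bulk $L^2$-size of $\ov$ on, say, $B_1\times\{x_{n+1}\in(0,1)\}\times(-1,1)$ is controlled by $\|v\|_{L^2(B_1\times(-1,1))}$ via the trace/energy estimate of Lemma \ref{lem:well-posedness} and standard Caffarelli-Silvestre bounds. (2) Apply the elliptic quantitative unique continuation (propagation of smallness from the set $W\times\{0\}$ into the interior) to obtain, for a point at transversal height $\delta$, a bound of the interpolation form $\|\ov\|_{L^2(B_1\times\{\delta\}\times(-1,1))} \lesssim N(\delta)\, A^{\theta(\delta)} \|v\|^{1-\theta(\delta)}$, where the exponent $\theta(\delta)$ degenerates as $\delta \to 0$ because the interior point approaches the boundary $\{x_{n+1}=0\}$ where the coefficient $x_{n+1}^{1-2s}$ degenerates; chaining three-balls estimates along a Harnack-type chain of balls whose radii shrink like $\delta$ produces the stated form $\theta(\delta) = \mu \delta^{\sigma}$ and the prefactor blow-up $\delta^{s-1}$, respectively $\delta^{-s}$ for the conormal derivative (the extra $\delta^{-s}$ or $\delta^{s-1}$ comes from interior elliptic estimates for $\nabla \ov$ near the degenerate boundary, and from the weight $x_{n+1}^{1-2s}$). (3) Integrate / take suprema in $t$: since all estimates are performed slice-wise and the relevant norms are $L^2$ in $t$, one carefully uses the concavity of $\log$ (Jensen) to pass the slice-wise interpolation inequality $\|\ov(t)\|^2 \lesssim \ldots A(t)^{2\theta}\|v(t)\|^{2(1-\theta)}$ to the integrated statement — this is the standard but slightly delicate point where one replaces pointwise-in-$t$ quantities by their $L^2((-1,1))$ norms. (4) Finally convert back: $\p_{n+1}^s \ov|_{W} = \p_t\varphi - v = \p_t\varphi$ on $W$ since $v$ is supported in $B_1$; but actually one keeps $\p_{n+1}^s \ov$ as the measured quantity since that is exactly what appears in the adjoint identity \eqref{ps_adjoint}.

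The main obstacle I expect is getting the precise $\delta$-dependence right in step (2): the three-balls constant and the exponent both degenerate as the observation approaches the degenerate hyperplane $\{x_{n+1}=0\}$, and one must produce exactly the factors $\delta^{s-1}$ (resp.\ $\delta^{-s}$) and $\mu\delta^{\sigma}$ rather than, say, something exponentially small in $1/\delta$. This requires either a carefully calibrated chain of three-balls inequalities with geometrically controlled radii, or a direct boundary Carleman estimate for the operator $\nabla\cdot x_{n+1}^{1-2s}\nabla$ with weights adapted to the distance to $\{x_{n+1}=0\}$; the weight $x_{n+1}^{1-2s}$ and the non-smoothness of $\p W$ (only Lipschitz) are the technical complications. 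A secondary obstacle is handling the inhomogeneous term from the reflection cleanly — since after reflection the equation is $\nabla\cdot x_{n+1}^{1-2s}\nabla \ov = g$ with $g$ a distribution supported on $W\times\{0\}$ of size $A$, one works instead with the reflection that kills the inhomogeneity by subtracting a suitable corrector, or treats $A$ as Cauchy data via a standard reduction, both of which are routine once the geometry is fixed. The time-integration in step (3) via Jensen is routine but must be written carefully to keep the final exponents independent of $t$.
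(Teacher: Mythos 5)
Your overall strategy for the first inequality matches the paper's: for each fixed $t$ one treats $\ov(\cdot,t)$ as a solution of the degenerate elliptic extension equation, propagates smallness from the Cauchy data on $W\times\{0\}$ into the slab $\{x_{n+1}\sim\delta\}$ by chaining $N\sim C|\log\delta|$ three-balls inequalities (the constraint that each ball stay away from the degenerate hyperplane forces the radii to shrink, which is exactly what turns $\alpha^N$ into $\mu\delta^{\sigma}$), converts the weighted slab bound into an unweighted trace at height $\delta$ via Caccioppoli plus the fundamental theorem of calculus (producing the prefactor $\delta^{s-1}$), bounds the bulk term by $\|v\|_{L^2}$ through the global energy estimate, and finally integrates in $t$ (the paper uses H\"older with exponents $1/\theta$, $1/(1-\theta)$ rather than Jensen, but this is the same routine step). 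One difference of route: to convert the Neumann data $\p_{n+1}^s\ov|_{W\times\{0\}}$ into a bulk quantity the paper does not reflect across $\{x_{n+1}=0\}$; it invokes an existing Lebeau--Robbiano-type boundary-bulk interpolation estimate for the degenerate operator (Proposition 5.6 of \cite{RS17}), which avoids having to handle a distributional source supported on the hypersurface. Your reflection variant could be made to work but is the more delicate of the two, as you yourself note. (Minor slip: on $W\times\{0\}$ the equation \eqref{eq:dual} is not imposed, so $\p_{n+1}^s\ov$ there is not $\p_t\varphi - v$; it is simply the nonlocal trace $(-\D)^s\varphi|_W$, which is the observable.)

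The genuine gap is in the second inequality. You attribute the bound for $x_{n+1}^{1-2s}\p_{n+1}\ov$ on the slice $\{x_{n+1}=\delta\}$ to ``interior elliptic estimates for $\nabla\ov$ near the degenerate boundary,'' but this does not supply a propagation-of-smallness mechanism: the three-balls inequality you chain is an inequality for $\ov$, not for its conormal derivative, and Caccioppoli only yields slab averages of the gradient, not an interpolation estimate on an exact slice with the correct $A^{\mu\delta^\sigma}$ gain. The missing idea is the duality observation that $\psi:=x_{n+1}^{1-2s}\p_{n+1}\ov$ is itself a solution of the conjugate degenerate equation $\nabla\cdot x_{n+1}^{1-2\tilde s}\nabla\psi=0$ with $\tilde s=1-s$ and Dirichlet trace $\p_{n+1}^s\ov$ on $\R^n\times\{0\}$ (c.f.\ \cite{CS07}, \cite{CS14}). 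This lets one run the entire three-balls chain, Caccioppoli, and trace argument verbatim on $\psi$ with $s$ replaced by $1-s$, which is precisely where the prefactor $\delta^{\tilde s-1}=\delta^{-s}$ comes from; one then passes back to $\ov$ via Caccioppoli near $W\times[\ell/2,\ell]$ before applying the boundary-bulk interpolation. Without this (or an equivalent device), step (2) of your plan does not produce the second estimate.
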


We prove these estimates as a consequence of a combination of bulk and boundary three balls inequalities and a global estimate for solutions to \eqref{eq:dual}.

\begin{proof}
\emph{Step 1: Estimate for $\ov$.}
We first consider the estimate for $\ov$. We fix $t \in (-1,1)$ for the time being. Note that the function $\bar{\varphi}(\,\cdot\,,t)$ solves the degenerate elliptic equation 
\[
\nabla \cdot x_{n+1}^{1-2s}\nabla \bar{\varphi}(\,\cdot\,,t) = 0 \text{ in } \R^{n+1}_+, \qquad \bar{\varphi}(\,\cdot\,,t)|_{x_{n+1} = 0} = \varphi(\,\cdot\,,t).
\]
In particular, $\ov(\,\cdot\,,t)|_{(\R^n \setminus \overline{B}_1) \times \{0\}} = 0$. We wish to propagate the possible smallness of $\partial_{n+1}^s \ov(\,\cdot\,,t)$ on $W \times \{0\}$ to an estimate for $\ov(\,\cdot\,,t)$ on $B_1 \times \{\delta\}$.

We recall the following generalization of the three spheres inequality (c.f.\ \cite{ARV09} for a survey of these bounds in the case $s=1/2$) and of the Lebeau-Robbiano boundary-bulk interpolation estimate (c.f.\ \cite{LR95} for the case $s=1/2$) to solutions of the degenerate elliptic equation:
\begin{itemize}
\item[(i)] Setting
\begin{align*}
Q_{r,t}(x_0):= B_r(x_0')\times ((x_{0})_{n+1}+r, (x_{0})_{n+1} - r) \times \{t\},
\end{align*}
the following (weighted) three balls estimate holds (c.f.\ Propositions 5.3 and 5.4 in \cite{RS17})
\begin{align*}
\|x_{n+1}^{\frac{1-2s}{2}} \overline{\varphi}\|_{L^2(Q_{2r,t}^+(x_0))}
\leq C_{s} \|x_{n+1}^{\frac{1-2s}{2}}\overline{\varphi}\|_{L^2(Q_{r,t}^+(x_0))}^{\alpha}  \|x_{n+1}^{\frac{1-2s}{2}}\overline{\varphi}\|_{L^2(Q_{4r,t}^+(x_0))}^{1-\alpha}.
\end{align*}
Here $\alpha = \alpha(s) \in (0,1)$ and $C=C(s)>1$, $Q_{2r,t}^+:= Q_{2r,t} \cap \R^{n+1}_+$ and either $x_0 \in \R^n \times \{0\}$ and $B_{4r}'(x_0') \subset \R^n \setminus \overline{B}_1$, or $(x_0)_{n+1} \geq 5 r$.
\item[(ii)]
We have the following fractional bulk-boundary interpolation estimate due to Proposition 5.6 together with Remark 5.2 in \cite{RS17}, 
\begin{align*}
\|x_{n+1}^{\frac{1-2s}{2}}\ov\|_{L^2(W/2 \times [\ell/2, \ell] \times \{t\})}
\leq C \| \ov\|_{H^1(\R^{n+1}_+, \,x_{n+1}^{1-2s}\,dx)}^{1-\mu} \|\p_{n+1}^{s} \ov\|_{L^{2}(W \times \{0\}\times \{t\})}^{\mu},
\end{align*}
where $\ell \in (0,1]$, $\mu \in (0,1)$ and $C>1$ are constants depending on $n$, $s$ and $W$, and $W/2:=\{x\in W: \dist(x,\p W) > (\max_{z \in \overline{W}}\,\dist(z,\p W))/2\}$. We have also written 
\[
\| \ov\|_{H^1(\R^{n+1}_+, \,x_{n+1}^{1-2s}\,dx)}
 := \|x_{n+1}^{\frac{1-2s}{2}} \ov \|_{L^2(\R^{n+1}_+ \times \{t\})} + \|x_{n+1}^{\frac{1-2s}{2}} \nabla \ov \|_{L^2(\R^{n+1}_+ \times \{t\})}.
\]
\end{itemize}

Thus, using that $\ov$ is a Caffarelli-Silvestre extension of $\varphi$, for each fixed time $t\in(-1,1)$ and each radius $r$ with $0<r\leq (x_0)_{n+1}/5$, we can apply the three spheres inequality from (i) in the spatial variables $x=(x',x_{n+1})$ in the form
\begin{align}
\label{eq:3balla}
\frac{\|x_{n+1}^{\frac{1-2s}{2}}  \ov\|_{L^2(Q_{2r,t}(x_0))}}{ \|x_{n+1}^{\frac{1-2s}{2}} \ov\|_{L^2(\R^{n+1}_+)}} \leq 
C_1 \left[ \frac{\|x_{n+1}^{\frac{1-2s}{2}}  \ov\|_{L^2(Q_{r,t}(x_0))}}{ \|x_{n+1}^{\frac{1-2s}{2}} \ov\|_{L^2(\R^{n+1}_+)}} \right]^{\alpha}.
\end{align}
We consider a chain of $N$ balls, $K:=\bigcup\limits_{i=1}^{N} Q_{r_i,t}(x_i)$, which connects $W\times [\ell/2,\ell] \times \{t\}$ with $B_1 \times [\delta/2, 2\delta] \times \{t\} $ (see e.g.\ \cite[proof of Theorem 5.5]{RS17} for more details on this argument). Due to the constraint $(x_i)_{n+1} \geq 5 r_i$, we note that the constant $N$ can be chosen to be of the order 
\begin{align}
\label{eq:N}
N \sim C |\log(\delta)|, 
\end{align}
where $C>1$ is a constant that only depends on $n$, $s$, $W$ and may change from line to line.

Applying \eqref{eq:3balla} iteratively along this chain, we infer that
\begin{align}
\label{eq:3balls}
\|x_{n+1}^{\frac{1-2s}{2}} \ov\|_{L^2(B_1 \times [\delta/2,2\delta]\times \{t\})} \leq C_2
\|x_{n+1}^{\frac{1-2s}{2}} \ov\|_{L^2(W/2  \times [\ell/2, \ell]\times \{t\})}^{\alpha^N} \|x_{n+1}^{\frac{1-2s}{2}} \ov\|_{L^2(\R^{n+1}_+ \times \{t\})}^{1-\alpha^N},
\end{align}
where $\ell \in(0,1]$ is as in (ii), and $C_2 \leq C C_1^{1+\alpha + \ldots + \alpha^{N-1}} \leq C C_1^{\frac{1}{1-\alpha}}$ so $C_2$ is independent of $N$. 
By Caccioppoli's inequality \cite[Lemma 4.5]{RS17}, \eqref{eq:3balls} can be upgraded to read
\begin{align*}
& \|x_{n+1}^{\frac{1-2s}{2}} \ov\|_{L^2(B_1 \times [\delta/2,2\delta]\times \{t\})} + \delta \|x_{n+1}^{\frac{1-2s}{2}} \nabla \ov\|_{L^2(B_1 \times [3\delta/4,\delta]\times \{t\})}  \\
&\leq 
C \| x_{n+1}^{\frac{1-2s}{2}}\ov\|_{L^2(W/2 \times [\ell/2, \ell]\times \{t\})}^{\alpha^N} \|x_{n+1}^{\frac{1-2s}{2}} \ov\|_{L^2(\R^{n+1}_+ \times \{t\})}^{1-\alpha^N}.
\end{align*}
Combining this with a simple trace estimate (using the fundamental theorem of calculus) also yields
\begin{align*}\delta^{\frac{1-2s}{2}} \| \ov\|_{L^2(B_1 \times \{\delta\}\times \{t\})} 
 \leq C \delta^{-1/2} \| x_{n+1}^{\frac{1-2s}{2}}\ov\|_{L^2(W/2 \times [\ell/2, \ell]\times \{t\})}^{\alpha^N} \|x_{n+1}^{\frac{1-2s}{2}} \ov\|_{L^2(\R^{n+1}_+ \times \{t\})}^{1-\alpha^N}.
\end{align*}
Combining this with (ii), i.e.\ the analogue of the bulk-boundary interpolation estimate of Lebeau and Robbiano \cite{LR95}, further yields
\begin{equation}
\label{eq:3balls_d}
\| \ov\|_{L^2(B_1 \times \{\delta\}\times \{t\})} \leq 
C \delta^{s-1} \|  \p_{n+1}^s\ov\|_{L^2(W \times \{0\}\times \{t\} )}^{\mu\alpha^N} \| \ov\|_{H^1(\R^{n+1}_+, \,x_{n+1}^{1-2s}\,dx)}^{1-\mu\alpha^N}.
\end{equation}
Here we have used that $\ov=0$ on $W \times \{0\}\times \{t\}$.

Integrating the square of \eqref{eq:3balls_d} in time for $t\in(-1,1)$ and applying H\"older's inequality then gives
\begin{equation}
\label{eq:propagate}
\begin{split}
\| \ov\|_{L^2(B_1 \times \{\delta\}\times (-1,1))} &\leq 
C \delta^{s-1} \| \p_{n+1}^s\ov\|_{L^2(W  \times \{0\} \times (-1,1))}^{\mu\alpha^N} \\
 &\qquad \times \| \ov\|_{L^2((-1,1), H^1(\R^{n+1}_+, \,x_{n+1}^{1-2s} \,dx))}^{1-\mu\alpha^N}.
\end{split}
\end{equation}
By energy estimates for solutions to \eqref{eq:dual} (c.f. Lemma \ref{lem:well-posedness}) we further have
\begin{align}
\label{eq:global}
\|\varphi\|_{L^2((-1,1),H^{s}(\R^n))} \leq C \|v\|_{L^2(B_1 \times (-1,1))}.
\end{align}
Combining this with a boundary estimate for the Caffarelli-Silvestre extension, i.e.,
\begin{align*}
\|\overline{\varphi}\|_{L^2((-1,1),H^{1}(\R^{n+1}_+,\,x_{n+1}^{1-2s} \,dx))}
\leq C \|\varphi\|_{L^2((-1,1),H^{s}(\R^{n}))},
\end{align*}
and with equation \eqref{eq:propagate}, then allows us to conclude that
\begin{align}
\label{eq:propagate1}
\| \ov\|_{L^2(B_1 \times \{\delta\}\times (-1,1))} \leq 
C \delta^{s-1} \| \p_{n+1}^s \ov\|_{L^2(W\times \{0\} \times (-1,1) )}^{\mu\alpha^N} \| v\|_{L^2(B_1 \times (-1,1))}^{1-\mu\alpha^N}.
\end{align}
Recalling the bound from \eqref{eq:N} for $N$ therefore yields the claimed inequality for $\varphi$. \\

\emph{Step 2: Estimate for $x_{n+1}^{1-2s}\p_{n+1}\ov$.}
With the strategy from Step 1 at hand, we explain the necessary modifications for the estimate for $\psi(x):=x_{n+1}^{1-2s}\p_{n+1}\ov(x)$. To this end we use duality, which gives that if $\ov$ is a solution to 
\begin{align*}
\nabla \cdot x_{n+1}^{1-2s} \nabla \ov = 0 \mbox{ in } \R^{n+1}_+, \
\lim\limits_{x_{n+1}\rightarrow 0} x_{n+1}^{1-2s} \p_{n+1} \ov
= g \mbox{ on } \R^n \times \{0\},
\end{align*}
then $\psi(x):= x_{n+1}^{1-2s}\p_{n+1}\ov$ is a solution to  
\begin{align*}
\nabla \cdot x_{n+1}^{1-2\tilde{s}} \nabla \psi = 0 \mbox{ in } \R^{n+1}_+, \ 
 \psi = g \mbox{ on } \R^n \times \{0\},
\end{align*}
with $\tilde{s}=1-s$ (c.f. \cite{CS07} and \cite{CS14}). Thus, in the interior of the upper half-plane we can argue analogously as in Step 1 and infer that with the notation of Step 1 
\begin{align*}
\| \psi \|_{L^2(B_1 \times \{\delta\}\times \{t\})} 
 \leq C \delta^{\tilde{s}-1} \| x_{n+1}^{\frac{1-2\tilde{s}}{2}} \psi \|_{L^2(W/2 \times [3\ell/4, 7\ell/8]\times \{t\})}^{\alpha^N} \|x_{n+1}^{\frac{1-2\tilde{s}}{2}} \psi \|_{L^2(\R^{n+1}_+ \times \{t\})}^{1-\alpha^N}.
\end{align*}
Spelling out the definition of $\psi$ then yields
\begin{align*}
\| x_{n+1}^{1-2s}\p_{n+1}\ov \|_{L^2(B_1 \times \{\delta\}\times \{t\})} \leq 
C \delta^{-s}  \| x_{n+1}^{\frac{1-2s}{2}}\p_{n+1}\ov \|_{L^2(W \times [3\ell/4, 7\ell/8]\times \{t\})}^{\alpha^N} \|x_{n+1}^{\frac{1-2s}{2}} \p_{n+1}\ov \|_{L^2(\R^{n+1}_+ \times \{t\})}^{1-\alpha^N}.
\end{align*}
Invoking Caccioppoli's inequality thus entails
\begin{align*}
\| x_{n+1}^{1-2s}\p_{n+1}\ov \|_{L^2(B_1 \times \{\delta\}\times \{t\})} \leq 
C \delta^{-s}   \| x_{n+1}^{\frac{1-2s}{2}} \ov \|_{L^2(W \times [\ell/2, \ell]\times \{t\})}^{\alpha^N} \|x_{n+1}^{\frac{1-2s}{2}} \p_{n+1}\ov \|_{L^2(\R^{n+1}_+ \times \{t\})}^{1-\alpha^N}.
\end{align*}
This, however, is in a form which allows us to apply the bulk-boundary interpolation estimate from (ii), whence
\begin{align}
\label{eq:interior_1}
\| x_{n+1}^{1-2s}\p_{n+1}\ov \|_{L^2(B_1 \times \{\delta\}\times \{t\})} \leq 
C \delta^{-s}  \|\p_{n+1}^s \ov \|_{L^2(W \times \{0\}\times \{t\})}^{\mu \alpha^N} \| \ov\|_{H^1(\R^{n+1}_+, \,x_{n+1}^{1-2s}\,dx)}^{1-\mu \alpha^N}.
\end{align}
Combining this with the energy estimate from \eqref{eq:global} therefore leads to the desired estimate for $x_{n+1}^{1-2s}\p_{n+1}\ov$.
\end{proof}

\begin{rmk}
\label{rmk:quant}
The argument for Proposition \ref{prop:small_s} can be regarded as consisting of two main ingredients: On the one hand, we exploit (interior and boundary) three balls arguments and propagation of smallness properties for solutions to \eqref{eq:harm_extend}. This leads to the bound in \eqref{eq:propagate} and only depends on the underlying \emph{nonlocal} operator (and its localization by means of the harmonic extension).
On the other hand, we combine these propagation of smallness results with a global energy estimate, c.f.\ \eqref{eq:global}.   
It is only at this point, at which we have made use of the full equation with its \emph{local and nonlocal} contributions, i.e.\ only at this point the parabolic nature of the problem is exploited. 
\end{rmk}

\section{Proof of Theorem \ref{prop:cost}} \label{sec:proof_main}

With the quantitative uniqueness result from Proposition \ref{prop:small_s} at hand, we now proceed to \emph{quantitative approximation} results. Here we are interested in estimating the \emph{cost} of approximation: More precisely, for a given function $h\in L^2(B_1 \times (-1,1))$ and an error threshold $\epsilon>0$, we seek to derive bounds on the size of suitable norms of a possible control function $f_{\epsilon,h}$ (in dependence of suitable norms of $h$ and of $\epsilon>0$). This will prove the main approximation result of Theorem \ref{prop:cost}.

We follow the variational approach presented in \cite{FZ00}. We thus characterise $f_{\epsilon,h}$ in terms of the minimizer of the functional
\begin{align}
\label{eq:functional}
\Je(v) = \frac{1}{2}\int\limits_{W \times (-1,1)}|\eta(-\D)^{s} \varphi|^2 \,dx \,dt + \epsilon \|v\|_{L^2(B_1 \times (-1,1))} - \int\limits_{B_1 \times (-1,1)} h v \,dx \,dt.
\end{align}
Here $\varphi$ and $v$ are related through \eqref{eq:dual}, and $\eta \in C^{\infty}_c(W)$ is a cutoff function satisfying $0 \leq \eta \leq 1$ and $\eta = 1$ on $W/2:=\{x\in W: \dist(x,\p W) > (\max_{z \in \overline{W}}\,\dist(z,\p W))/2\}$. If $0 < s < 1/2$ we could replace $\eta$ by the characteristic function $\chi_{W}$, but if $s \geq 1/2$ then $\chi_W$ is not a pointwise multiplier on $H^s(\mR^n)$ and we need to use a smooth cutoff.

In order to prove the result of Theorem \ref{prop:cost}, we argue in three steps, which we split into three lemmata: We first show that, for a given function $h$ and an error threshold $\epsilon>0$, a unique minimizer $\hat{v}$ of the functional \eqref{eq:functional} exists (Lemma \ref{lem:proof1}). This is a consequence of the weak unique continuation properties of the fractional Laplacian. Secondly, if $\hat{\varphi}$ is the solution of \eqref{eq:dual} corresponding to $\hat{v}$, we argue that $f:=-\eta^2 (-\D)^{s}\hat{\varphi}$ is a control for $h$ corresponding to an error threshold $\epsilon>0$ (i.e., that it satisfies the first estimate in \eqref{eq:approx_cost}). This follows from minimality (Lemma \ref{lem:proof2}). Finally, in the last step (Lemma \ref{lem:proof3}), we provide the bound on the cost of approximation (i.e., the second estimate in \eqref{eq:approx_cost}). This relies on the estimates from Proposition \ref{prop:small_s}.

\begin{lem}[Existence of minimizers]
\label{lem:proof1}
Let $s\in(0,1)$, $n\geq 1$, $\epsilon >0$. Assume that $h \in H^1_0(B_1 \times (-1,1))$. Let 
\begin{align*}
\Je: L^2(B_1 \times (-1,1))\rightarrow \R \cup \{\pm \infty\}, \
v \mapsto \Je(v)
\end{align*}
be as in \eqref{eq:functional}. Then there exists a unique minimizer $\hat{v}\in L^2(B_1\times (-1,1))$ of $\Je$.
\end{lem}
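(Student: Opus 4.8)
The plan is to apply the direct method of the calculus of variations: verify that $\Je$ is convex, lower semicontinuous, and coercive on $L^2(B_1 \times (-1,1))$, which together with strict convexity in the appropriate component gives existence and uniqueness. The functional $\Je$ is manifestly a sum of three pieces: a quadratic (in $v$) term $\frac12 \int_{W \times (-1,1)} |\eta (-\D)^s \varphi|^2$, where $\varphi = \varphi(v)$ solves \eqref{eq:dual}; the term $\epsilon \norm{v}_{L^2(B_1 \times (-1,1))}$; and the linear term $-\int_{B_1 \times (-1,1)} h v$. Since $v \mapsto \varphi$ is linear (by linearity and well-posedness of \eqref{eq:dual}, Lemma \ref{lem:well-posedness} and Remark \ref{rmk:weak_sol}) and $(-\D)^s$ is linear, the map $v \mapsto \eta (-\D)^s \varphi$ is linear and bounded from $L^2(B_1 \times (-1,1))$ into $L^2(W \times (-1,1))$ — here one uses Lemma \ref{lem:zero}, which shows $(-\D)^s \varphi \in L^2((-1,1), H^r(W))$ with norm controlled by $\norm{v}_{L^2(B_1 \times (-1,1))}$; multiplication by $\eta \in C^\infty_c(W)$ then keeps us in $L^2(W \times (-1,1))$. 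Hence the first term is a continuous quadratic form, the second is continuous and convex, and the third is continuous and linear; in particular $\Je$ is convex and (weakly) lower semicontinuous.

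The main obstacle is \textbf{coercivity}: one must show $\Je(v) \to +\infty$ as $\norm{v}_{L^2(B_1 \times (-1,1))} \to \infty$. The term $\epsilon \norm{v}_{L^2}$ grows only linearly, the quadratic term could in principle vanish (if $\eta (-\D)^s \varphi \equiv 0$), and $-\int h v$ can be as negative as $-\norm{h}_{L^2}\norm{v}_{L^2}$. So linear growth is the best one can hope for, and coercivity fails outright unless $\epsilon > \norm{h}_{L^2}$ — which we are not assuming. The standard resolution (as in \cite{FZ00}) is to argue along a normalized sequence: suppose $v_j$ with $\norm{v_j}_{L^2} \to \infty$ and set $w_j = v_j / \norm{v_j}_{L^2}$; dividing $\Je(v_j)$ by $\norm{v_j}_{L^2}$, if the quadratic term does not blow up faster than linearly then (passing to a subsequence with $w_j \rightharpoonup w$ weakly) one gets $\eta (-\D)^s \varphi(w) = 0$ in $W \times (-1,1)$ in the limit. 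At this point the \textbf{weak unique continuation property of the fractional Laplacian} enters exactly as in the proof of Theorem \ref{prop:approx_qual}: since $\eta = 1$ on $W/2$, we have $\varphi(w) = (-\D)^s \varphi(w) = 0$ on $W/2 \times (-1,1)$, hence $w = 0$, contradicting $\norm{w_j}_{L^2} = 1$ (one must be slightly careful to quantify: the contradiction comes from showing $\liminf \Je(v_j)/\norm{v_j}_{L^2} \geq \epsilon > 0$, so $\Je(v_j) \to +\infty$). This is the place where, as the authors note, ``existence is a consequence of weak unique continuation''.

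With coercivity, convexity, and lower semicontinuity in hand, the direct method yields a minimizer $\hat v$. For \textbf{uniqueness}, observe that although $\Je$ need not be strictly convex as a whole (the quadratic form $v \mapsto \norm{\eta(-\D)^s\varphi(v)}^2$ has a kernel, and $v \mapsto \epsilon\norm{v}_{L^2}$ is convex but not strictly so away from $0$), the combination is strictly convex: if $\hat v_1, \hat v_2$ are two minimizers, then by convexity every point on the segment is also a minimizer, forcing the quadratic term to be affine along the segment (hence $\eta(-\D)^s\varphi(\hat v_1 - \hat v_2) = 0$, so $\hat v_1 = \hat v_2$ by weak unique continuation again) — alternatively, once one knows $\eta(-\D)^s\varphi$ agrees, strict convexity of the norm term on the remaining directions, or simply re-running the unique continuation argument, closes it. I would present uniqueness via this convexity-plus-unique-continuation argument rather than trying to force strict convexity directly.
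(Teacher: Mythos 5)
Your proposal is correct and follows essentially the same route as the paper: continuity of the quadratic term via Lemma \ref{lem:zero}, coercivity via the normalized-sequence argument combined with weak unique continuation for the fractional Laplacian (your self-correction that the real contradiction is $\liminf \Je(v_j)/\norm{v_j}_{L^2} \geq \epsilon/2 > 0$, not the weak limit being zero, is exactly the point), and uniqueness again via unique continuation. One small remark: the linear map $v \mapsto \eta(-\D)^s\varphi(v)$ has \emph{trivial} kernel precisely by weak unique continuation, so the quadratic term and hence $\Je$ is strictly convex outright (as the paper asserts); your segment argument is just a rephrasing of this fact rather than a necessary workaround.
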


\begin{proof}
It is enough to prove that $\Je$ is strictly convex, continuous, and coercive, since then it will have a unique minimizer (see e.g.\ \cite[Section II.1]{EkelandTemam}). The functional $\Je$ is convex since it is the sum of three convex functionals, and it is strictly convex since $v \mapsto \| \eta (-\Delta)^s \varphi \|_{L^2(W \times (-1,1))}^2$ is strictly convex (this uses again weak unique continuation for the fractional Laplacian). In addition, $\Je$ is continuous since it is the sum of three continuous functionals: The fact that $v \mapsto \| \eta (-\Delta)^s \varphi \|_{L^2(W \times (-1,1))}^2$ is continuous follows since $(-\D)^{s}\varphi$ is evaluated at $W \times (-1,1)$, where $\varphi=0$ and where according to Lemma \ref{lem:zero} strong elliptic regularization is present.

Hence, it suffices to prove coercivity of \eqref{eq:functional} to obtain the existence of minimizers. This will be reduced to the weak unique continuation property for the fractional Laplacian. To this end, let $v_k \in L^2(B_1 \times (-1,1))$ be a sequence such that $\|v_k\|_{L^2(B_1 \times (-1,1))} \rightarrow \infty$. We seek to show that
\begin{align*}
\Je(v_k) \rightarrow \infty \mbox{ as } k \rightarrow \infty.
\end{align*}
Abbreviating the corresponding normalized functions by $\hat{v}_k:= \frac{v_k}{\|v_k\|_{L^2(B_1 \times (-1,1))}}$ and the associated solutions to \eqref{eq:dual} by $\hat{\varphi}_k$, we have that
\begin{align*}
\frac{\Je(v_k)}{\|v_k\|_{L^2(B_1 \times (-1,1))}}
= \frac{\|v_k\|_{L^2(B_1 \times (-1,1))}}{2}\|\eta(-\D)^{s}\hat{\varphi}_k\|_{L^2(W \times (-1,1))}^2 + \epsilon  - \int\limits_{B_1 \times (-1,1)} h \hat{v}_k \,dx \,dt.
\end{align*} 

We now distinguish two scenarios: If on the one hand $\liminf\limits_{k \rightarrow \infty} \|\eta(-\D)^{s}\hat{\varphi}_k\|_{L^2(W \times (-1,1))}>0$, then the normalization of $\hat{v}_k$ and the divergence of $\|v_k\|_{L^2(B_1 \times (-1,1))}$ imply that 
\begin{align*}
\liminf\limits_{k \rightarrow \infty} \frac{\Je(v_k)}{\|v_k\|_{L^2(B_1 \times (-1,1))}}  \rightarrow \infty, 
\end{align*}
which proves the desired coercivity. 

If on the other hand, $\lim\limits_{k \rightarrow \infty} \|\eta(-\D)^{s}\hat{\varphi}_k\|_{L^2(W \times (-1,1))} =0$ (here and below we understand that we have passed to a suitable subsequence), we deduce coercivity from the weak unique continuation property of the limiting problem as $k\rightarrow \infty$. More precisely, we note that:
\begin{itemize}

\item
By virtue of the normalization and the Banach-Alaoglu theorem,
$\hat{v}_k \rightharpoonup \hat{v} \mbox{ in } L^2(B_1 \times (-1,1))$ for some $\hat{v} \in L^2(B_1 \times (-1,1))$.

\item
Energy estimates and the weak form of the equation imply that 
\[
\|\hat{\varphi}_k\|_{L^2((-1,1),H^{s}(\R^n))} \leq C < \infty
\]
uniformly in $k\in \N$ (c.f. \eqref{eq:global} and Lemma \ref{lem:well-posedness}). Thus, $\hat{\varphi}_k \rightharpoonup \hat{\varphi} $ in $L^2(\R^n \times (-1,1))\cap L^2((-1,1), \tilde{H}^s(B_1))$ for some $\hat{\varphi} \in L^2(\R^n \times (-1,1))\cap L^2((-1,1), \tilde{H}^s(B_1))$ with $\hat{\varphi}|_{(\R^n \setminus \overline{B}_1) \times (-1,1)} = 0$. This also implies that $(-\partial_t + (-\Delta)^s) \hat{\varphi}_k \to (-\partial_t + (-\Delta)^s) \hat{\varphi}$ in $\mathcal{D}'(\R^n \times (-1,1))$. Since $\hat{\varphi}_k$ is a solution corresponding to $\hat{v}_k$ we obtain that 
\[
(-\partial_t + (-\Delta)^s) \hat{\varphi} = \hat{v} \text{ in $B_1 \times (-1,1)$.}
\]

\item
Since further $\eta(-\D)^{s}\hat{\varphi}_k  \rightarrow 0$ in $L^2(W\times (-1,1))$, this discussion shows that
\begin{align*}
\hat{\varphi}=0 \mbox{ and } (-\D)^{s}\hat{\varphi} =0 \mbox{ in } W/2 \times (-1,1).
\end{align*}
\end{itemize}
As $(-\D)^s \hat{\varphi} \in L^2((-1,1), H^{-s}(\R^n))$, (spatial) weak unique continuation applied at a.e. time slice (see e.g.\ \cite[Theorem 1.2]{GSU16}) however implies that $\hat{\varphi}=0$ for a.e. $t\in(-1,1)$ and thus $\hat{v}=0$. As a consequence, $\int\limits_{B_1 \times (-1,1)} h \hat{v}_k \,dx \,dt \rightarrow 0$, so that for a sufficiently large choice of $k\in \N$
\begin{align*}
\frac{\Je(v_k)}{\|v_k\|_{L^2(B_1 \times (-1,1))}}
\geq \frac{\|v_k\|_{L^2(B_1 \times (-1,1))}}{2}\|\eta(-\D)^{s}\hat{\varphi}_k\|_{L^2(W \times (-1,1))}^2 + \frac{\epsilon}{2} \geq \frac{\epsilon}{2},
\end{align*}
which also implies the claimed coercivity.
\end{proof}

With existence of a minimizer at hand, we address the approximation property:

\begin{lem}[Approximation]
\label{lem:proof2}
Let $s\in(0,1)$, $n\geq 1$, $\epsilon >0$. Assume that $h \in H^1_0(B_1 \times (-1,1))$. Let $\Je$
be the functional from \eqref{eq:functional} and let $\hat{v}$ be its unique minimizer. Denote by $\hat{\varphi}$ the solution to \eqref{eq:dual} with inhomogeneity $\hat{v}$, and let $f := - \eta^2 (-\D)^s \hat{\varphi}$.
Then the solution $u$ of \eqref{eq:eq_main} satisfies
\begin{align}
\label{eq:approx}
\|u-h\|_{L^2(B_1 \times (-1,1))} \leq \epsilon.
\end{align}
Moreover, $f \in L^2((-1,1), C^{\infty}_c(W))$ and 
\begin{align*}
\|f\|_{L^2(W \times (-1,1))}^2 \leq -2 \Je(\hat{v}).
\end{align*}
\end{lem}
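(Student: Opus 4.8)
The plan is to derive both assertions of Lemma \ref{lem:proof2} from the Euler--Lagrange equation of the functional $\Je$ at its minimizer $\hat v$, using the duality pairing \eqref{ps_adjoint} that relates $P_s f$ to $(-\D)^s\varphi$. First I would compute the first variation of $\Je$ at $\hat v$ in a direction $v \in L^2(B_1\times(-1,1))$. Writing $\varphi$ for the solution of \eqref{eq:dual} with inhomogeneity $v$, linearity of \eqref{eq:dual} in $v$ gives that the quadratic term contributes $(\eta(-\D)^s\hat\varphi, \eta(-\D)^s\varphi)_{L^2(W\times(-1,1))}$, the linear term contributes $-(h,v)_{L^2(B_1\times(-1,1))}$, and the only subtlety is the non-smooth term $\epsilon\|\hat v\|_{L^2}$: if $\hat v\neq 0$ its derivative in direction $v$ is $\epsilon\,(\hat v,v)_{L^2}/\|\hat v\|_{L^2}$, while if $\hat v=0$ one only gets the subgradient inequality $\epsilon\|v\|_{L^2}\geq (h,v)_{L^2} - (\dots)$. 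I would treat the case $\hat v\neq 0$ as the main one and note that $\hat v=0$ forces (again by the argument of Lemma \ref{lem:proof1}) $h\equiv 0$ modulo a trivial estimate, so \eqref{eq:approx} holds vacuously; alternatively one argues uniformly with subdifferentials.

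Next I would rewrite $(\eta(-\D)^s\hat\varphi, \eta(-\D)^s\varphi)_{L^2(W\times(-1,1))} = ((-\D)^s\varphi, \eta^2(-\D)^s\hat\varphi)_{L^2(W\times(-1,1))} = -((-\D)^s\varphi, f)_{L^2(W\times(-1,1))}$ by definition of $f$, and then invoke \eqref{ps_adjoint}: since $\varphi$ solves \eqref{eq:dual} with right-hand side $v$, one has $-((-\D)^s\varphi,f)_{L^2(W\times(-1,1))} = (P_s f, v)_{L^2(B_1\times(-1,1))} = (u,v)_{L^2(B_1\times(-1,1))}$, where $u$ solves \eqref{eq:eq_main} with this $f$. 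Hence the stationarity condition $\Je'(\hat v)[v]=0$ reads
\begin{equation*}
(u - h, v)_{L^2(B_1\times(-1,1))} + \frac{\epsilon}{\|\hat v\|_{L^2}}(\hat v, v)_{L^2(B_1\times(-1,1))} = 0 \quad\text{for all } v.
\end{equation*}
Taking $v = u-h$ and using Cauchy--Schwarz on the second term gives $\|u-h\|_{L^2}^2 \leq \epsilon\|u-h\|_{L^2}$, i.e.\ $\|u-h\|_{L^2(B_1\times(-1,1))}\leq\epsilon$, which is \eqref{eq:approx}.

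For the regularity claim $f\in L^2((-1,1),C^\infty_c(W))$: since $\hat v \in L^2(B_1\times(-1,1))$, Lemma \ref{lem:zero} applies to $\hat\varphi$ (which solves \eqref{eq:weaktwo_dual} with $f=0$), giving $(-\D)^s\hat\varphi \in L^2((-1,1),H^r(W))$ for every $r\geq 0$; by Sobolev embedding $(-\D)^s\hat\varphi(\cdot,t)\in C^\infty(W)$ for a.e.\ $t$, and multiplying by $\eta^2\in C^\infty_c(W)$ makes $f(\cdot,t)$ compactly supported in $W$, with $t\mapsto f(\cdot,t)$ square-integrable into that space. Finally, for the energy bound I would take $v=\hat v$ in the stationarity identity above, obtaining $\|\eta(-\D)^s\hat\varphi\|_{L^2(W\times(-1,1))}^2 + \epsilon\|\hat v\|_{L^2} - (h,\hat v)_{L^2} = 0$; subtracting this from twice the definition $\Je(\hat v) = \tfrac12\|\eta(-\D)^s\hat\varphi\|_{L^2}^2 + \epsilon\|\hat v\|_{L^2} - (h,\hat v)_{L^2}$ yields $2\Je(\hat v) = -\|\eta(-\D)^s\hat\varphi\|_{L^2(W\times(-1,1))}^2$, and since $|f| = |\eta^2(-\D)^s\hat\varphi| \leq |\eta(-\D)^s\hat\varphi|$ pointwise (as $0\leq\eta\leq1$), we get $\|f\|_{L^2(W\times(-1,1))}^2 \leq \|\eta(-\D)^s\hat\varphi\|_{L^2(W\times(-1,1))}^2 = -2\Je(\hat v)$. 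The main obstacle I anticipate is handling the non-differentiable term $\epsilon\|\cdot\|_{L^2}$ cleanly — either by restricting to $\hat v\neq0$ and disposing of the trivial case separately, or by phrasing everything via the subdifferential inequality, which still delivers both $\|u-h\|_{L^2}\leq\epsilon$ and $\|f\|^2\leq-2\Je(\hat v)$ after the same manipulations.
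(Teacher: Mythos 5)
Your proposal is correct and follows essentially the same route as the paper: a first-variation/Euler--Lagrange argument at $\hat v$, the duality identity \eqref{ps_adjoint} to convert $\eta^2(-\D)^s\hat\varphi$ into $u=P_sf$, and the choice $v=\hat v$ to obtain $\Je(\hat v)=-\tfrac12\|\eta(-\D)^s\hat\varphi\|_{L^2}^2$. The paper handles the nonsmooth term uniformly via the triangle inequality $\|\hat v+\mu v\|-\|\hat v\|\le|\mu|\,\|v\|$, which yields the inequality form \eqref{eq:EL} valid for all $\hat v$ (including $\hat v=0$) and gives \eqref{eq:approx} directly by duality; this is exactly your ``subdifferential'' alternative and is the cleaner route. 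One small correction: $\hat v=0$ does \emph{not} force $h\equiv 0$ --- it only gives $|(h,v)_{L^2}|\le\epsilon\|v\|_{L^2}$ for all $v$, i.e.\ $\|h\|_{L^2}\le\epsilon$, which together with $f=0$ and $u=0$ still yields \eqref{eq:approx}, so the conclusion stands.
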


\begin{proof}
Let $\hat{v}$ be the minimizer of the problem \eqref{eq:functional} and let $\hat{\varphi}$ be the corresponding solution of \eqref{eq:dual}.

The approximation property in \eqref{eq:approx} then follows from spelling out the minimality condition
\begin{align*}
\Je(\hat{v}+\mu v)-\Je(\hat{v}) \geq 0,
\end{align*}
for all $\mu \in \R$, combined with the triangle inequality to estimate the difference of the $L^2$ norms and by passing to the limit $\mu \rightarrow 0_{\pm}$. Indeed,
\begin{align}
0 &\leq \frac{1}{2}\|\eta(-\D)^{s}(\hat{\varphi} + \mu \varphi)\|_{L^2(W \times (-1,1))}^2 - \frac{1}{2}\|\eta(-\D)^{s}\hat{\varphi} \|_{L^2(W \times (-1,1))}^2 \notag \\
& \quad + \epsilon \|\hat{v} + \mu v\|_{L^2(B_1 \times (-1,1))}
- \epsilon \|\hat{v}\|_{L^2(B_1 \times (-1,1))}
- \mu \int\limits_{B_1 \times (-1,1)} h v \,dx \,dt \notag \\
& \leq \mu \int\limits_{W \times (-1,1)} \eta^2 (-\D)^{s} \hat{\varphi} (-\D)^{s} \varphi \,dx \,dt + \frac{\mu^2}{2} \int\limits_{W \times (-1,1)} \eta^2|(-\D)^{s} \varphi|^2 \,dx \,dt \notag \\
 & \quad + \epsilon |\mu| \|v\|_{L^2(B_1 \times (-1,1))} - \mu \int\limits_{B_1 \times (-1,1)} h v \,dx \,dt. \label{minimization_argument}
\end{align}
Dividing by $\mu\neq 0$ and passing to the limits $\mu\rightarrow 0_{\pm}$, we obtain
\begin{align}
\label{eq:EL}
\left|\int\limits_{W \times (-1,1)} \eta^2 (-\D)^{s} \hat{\varphi} (-\D)^{s}\varphi \,dx \,dt - \int\limits_{B_1 \times (-1,1)} h v \,dx \,dt \right| \leq \epsilon \|v\|_{L^2(B_1 \times (-1,1))}.
\end{align}
Here $\varphi$ denotes the solution to \eqref{eq:dual} corresponding to $v\in L^2(B_1 \times (-1,1))$.
Defining $f:= - \eta^2 (-\D)^{s} \hat{\varphi}$ and denoting the associated solution to \eqref{eq:eq_main} by $u$, an analogous computation as in \eqref{eq:HB} implies that \eqref{eq:EL} turns into 
\begin{align}
\label{eq:EL1}
\left|\int\limits_{B_1 \times (-1,1)} (u-h)v  \,dx \,dt\right| \leq \epsilon \|v\|_{L^2(B_1 \times (-1,1))}.
\end{align}
By duality this yields \eqref{eq:approx}. One also has $f \in L^2((-1,1), C^{\infty}_c(W))$ by Lemma \ref{lem:zero}.

We note that choosing $v = \hat{v}$ and repeating the argument leading to \eqref{minimization_argument} (where one now avoids the triangle inequality) gives for $|\mu|$ small 
\[
0 \leq \mu \| \eta (-\D)^{s} \hat{\varphi} \|_{L^2}^2 + \frac{\mu^2}{2} \| \eta (-\D)^{s} \hat{\varphi} \|_{L^2}^2 + \epsilon \mu \|\hat{v}\|_{L^2} - \mu \int\limits_{B_1 \times (-1,1)} h \hat{v} \,dx \,dt.
\]
Dividing by $\mu \neq 0$ and letting $\mu \to 0_{\pm}$ implies that 
\begin{align*}
\int\limits_{W \times (-1,1)} |\eta (-\D)^{s} \hat{\varphi}|^2  \,dx \,dt + \epsilon \|\hat{v}\|_{L^2(B_1 \times (-1,1))} - \int\limits_{B_1 \times (-1,1)} h \hat{v} \,dx \,dt = 0,
\end{align*}
which directly leads to
\begin{align*}
\Je(\hat{v}) = - \frac{1}{2}\|\eta (-\D)^{s}\hat{\varphi}\|_{L^2(W \times (-1,1))}^2.
\end{align*}
Finally, since $0 \leq \eta \leq 1$ we have 
\[
\norm{f}_{L^2(W \times (-1,1))}^2 = \int\limits_{W \times (-1,1)} \eta^4 \abs{(-\Delta)^s \hat{\varphi}}^2 \,dx \,dt \leq \norm{\eta (-\Delta)^s  \hat{\varphi}}_{L^2(W \times (-1,1))}^2 = -2 \Je(\hat{v}).
\]
\end{proof}

Last but not least, we estimate the cost of control.

\begin{lem}[Cost of control]
\label{lem:proof3}
Let $s\in(0,1)$, $n\geq 1$, $\epsilon >0$. Assume that $h \in H^1_0(B_1 \times (-1,1))$. Let $\Je$
be the functional from \eqref{eq:functional} and let $\hat{v}$ be its unique minimizer. Denote by $\hat{\varphi}$ the solution to \eqref{eq:dual} with inhomogeneity $\hat{v}$.
Then we have that $f := - \eta^2 (-\D)^s \hat{\varphi}$ satisfies, for some $C$ and $\sigma$ only depending on $n, s, W$, 
\begin{align}
\label{eq:approx_a}
\|f\|_{L^2(W \times (-1,1))}\leq C e^{C (1+\|h\|_{H^1(B_1 \times (-1,1))}^{\sigma})\epsilon^{-\sigma}}\|h\|_{H^1(B_1 \times (-1,1))}.
\end{align}
\end{lem}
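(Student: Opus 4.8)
The plan is to combine the minimality identity for $\hat v$ (established in Lemma~\ref{lem:proof2}) with the propagation of smallness estimate from Proposition~\ref{prop:small_s}, applied at a suitable scale $\delta$ which is then optimized. From Lemma~\ref{lem:proof2} we know $\|f\|_{L^2(W\times(-1,1))}^2 \leq -2\Je(\hat v)$ and also $\Je(\hat v) = -\frac12\|\eta(-\D)^s\hat\varphi\|_{L^2(W\times(-1,1))}^2$; moreover spelling out $\Je(\hat v)$ gives the identity
\begin{equation}
\label{eq:identity_cost}
\|\eta(-\D)^s\hat\varphi\|_{L^2(W\times(-1,1))}^2 + \epsilon\|\hat v\|_{L^2(B_1\times(-1,1))} = \int_{B_1\times(-1,1)} h\hat v\,dx\,dt.
\end{equation}
So everything reduces to bounding the right-hand side $\int h\hat v$. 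Since $h\in H^1_0(B_1\times(-1,1))$, we want to move (fractional) derivatives from $h$ onto $\hat\varphi$ and exploit that $\hat\varphi$ solves the dual equation \eqref{eq:dual} with right-hand side $\hat v$, so that $\hat v$ can be traded for $\hat\varphi$ at the cost of controlling $\hat\varphi$ near the boundary $B_1\times\{0\}$ of the extension.

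First I would pass to the Caffarelli-Silvestre extension: writing $\overline{\hat\varphi}$ for the extension of $\hat\varphi$, use the equation $(-\p_t+(-\D)^s)\hat\varphi = \hat v$ together with an integration by parts (as in \eqref{eq:HB}) to express $\int_{B_1\times(-1,1)} h\hat v$ in terms of boundary integrals of $\overline{\hat\varphi}$ and $x_{n+1}^{1-2s}\p_{n+1}\overline{\hat\varphi}$ against (the extension of) $h$, plus a term $\int \hat\varphi\,\p_t h$ which is fine because $h\in H^1_0$. The key point is to then replace the trace at $x_{n+1}=0$ by the trace at $x_{n+1}=\delta$: by the fundamental theorem of calculus in $x_{n+1}$ and a Caccioppoli/trace estimate, the difference is controlled by $\int_0^\delta \|x_{n+1}^{1-2s}\p_{n+1}\overline{\hat\varphi}\|\cdots$ type quantities, which by the global energy bound \eqref{eq:global} contribute at most $C\delta^{\text{(something positive)}}\|\hat v\|_{L^2(B_1\times(-1,1))}\|h\|_{H^1}$. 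On the slice $x_{n+1}=\delta$, Proposition~\ref{prop:small_s} bounds $\|\overline{\hat\varphi}\|_{L^2(B_1\times\{\delta\}\times(-1,1))}$ and $\|x_{n+1}^{1-2s}\p_{n+1}\overline{\hat\varphi}\|_{L^2(B_1\times\{\delta\}\times(-1,1))}$ by
\[
C\delta^{-1}\,\|\p_{n+1}^s\overline{\hat\varphi}\|_{L^2(W\times\{0\}\times(-1,1))}^{\mu\delta^\sigma}\,\|\hat v\|_{L^2(B_1\times(-1,1))}^{1-\mu\delta^\sigma},
\]
and since $\eta=1$ on $W/2$ and $\p_{n+1}^s\overline{\hat\varphi} = (-\D)^s\hat\varphi$ there, the quantity $\|\p_{n+1}^s\overline{\hat\varphi}\|_{L^2(W\times\{0\}\times(-1,1))}$ is controlled by $\|\eta(-\D)^s\hat\varphi\|_{L^2(W\times(-1,1))} =: A$ (up to the contribution on $W\setminus W/2$, which one absorbs by choosing the cutoff, or by using the $W/2$ version of Proposition~\ref{prop:small_s} directly — this is a minor bookkeeping point).

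Putting these together, \eqref{eq:identity_cost} yields an inequality of the schematic form
\[
A^2 + \epsilon\|\hat v\| \;\leq\; C\delta^{-1} A^{\mu\delta^\sigma}\|\hat v\|^{1-\mu\delta^\sigma}\|h\|_{H^1} + C\delta^{\gamma}\|\hat v\|\,\|h\|_{H^1},
\]
with $\|\hat v\| = \|\hat v\|_{L^2(B_1\times(-1,1))}$ and $\gamma>0$. The plan for the endgame: first choose $\delta$ (depending on $\|h\|_{H^1}$ and $\epsilon$) small enough that the last term is $\leq \frac{\epsilon}{2}\|\hat v\|$, i.e. $C\delta^\gamma\|h\|_{H^1}\leq\epsilon/2$, so $\delta \sim (\epsilon/\|h\|_{H^1})^{1/\gamma}$; this term is then absorbed on the left. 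One is left with $A^2 + \frac\epsilon2\|\hat v\| \leq C\delta^{-1}A^{\mu\delta^\sigma}\|\hat v\|^{1-\mu\delta^\sigma}\|h\|_{H^1}$. Now apply Young's inequality with exponents adapted to the tiny power $\theta := \mu\delta^\sigma$: the right side is $\leq \frac12 A^2 + C(\delta^{-1}\|h\|_{H^1})^{\frac{2}{2-\theta}}\|\hat v\|^{\frac{2(1-\theta)}{2-\theta}}$, absorb $\frac12 A^2$ on the left, and then since $\frac{2(1-\theta)}{2-\theta}<1$ another Young's inequality against $\frac\epsilon2\|\hat v\|$ gives $\|\hat v\|$ bounded by $C\epsilon^{-\kappa}(\delta^{-1}\|h\|_{H^1})^{\lambda}$ for explicit exponents $\kappa,\lambda$ depending on $\theta$. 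Tracking the dependence, $\delta^{-1}\sim(\|h\|_{H^1}/\epsilon)^{1/\gamma}$ and $\theta = \mu\delta^\sigma \sim \mu(\epsilon/\|h\|_{H^1})^{\sigma/\gamma}$, so $1/\theta$ — which controls the exponents — grows polynomially in $(1+\|h\|_{H^1})/\epsilon$, and feeding this back produces the claimed bound $\|f\|\leq A \leq Ce^{C(1+\|h\|_{H^1}^{\sigma})\epsilon^{-\sigma}}\|h\|_{H^1}$ after relabeling $\sigma$. \textbf{The main obstacle} I anticipate is the careful bookkeeping in this final optimization: one must keep the exponential structure from the $\delta^\sigma$ in the exponent of $A$ and verify that the two successive Young's inequalities, after substituting the chosen $\delta$, collapse into a single exponential of the stated form rather than something worse; a secondary technical point is justifying the integration-by-parts identity reducing $\int h\hat v$ to boundary terms of the extension at the low regularity available ($\hat\varphi\in L^2 H^s$, $h\in H^1_0$), which I would handle by the density/approximation argument already used for \eqref{eq:HB} together with Lemma~\ref{lem:zero}.
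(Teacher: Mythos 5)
Your proposal is correct and follows essentially the same route as the paper: both hinge on the variational bound $\|f\|_{L^2(W\times(-1,1))}^2 \le -2\Je(\hat v)$ from Lemma \ref{lem:proof2}, the replacement of the trace at $x_{n+1}=0$ by the trace at $x_{n+1}=\delta$ with the fundamental-theorem-of-calculus error term dictating $\delta \sim (\epsilon/(1+\|h\|_{H^1}))^{1/\min\{s,1-s\}}$, Proposition \ref{prop:small_s} on the $\delta$-slice, and Young's inequality with the tiny exponent $\mu\delta^{\sigma}$ producing the exponential. The only (cosmetic) difference is that the paper bounds from below the infimum of an auxiliary functional $\Jes$ over all $v$, whereas you work with the Euler--Lagrange identity at the minimizer $\hat v$ and solve the resulting inequality for $\|\hat v\|$ and $\|\eta(-\D)^s\hat\varphi\|$; both versions of the bookkeeping yield the same estimate.
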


\begin{proof}
In order to finally provide the estimate on the cost of control, we consider a second functional in addition to $\Je(v)$:
\begin{align*}
\Jes(v) & := \frac{1}{2}\int\limits_{W \times (-1,1)}|\eta(-\D)^{s} \varphi|^2 \,dx \,dt + \frac{\epsilon}{2} \|v\|_{L^2(B_1 \times (-1,1))} \\
& \quad - \int\limits_{B_1 \times (-1,1)} h [(-\p_t + \p_{n+1}^s)\overline{\varphi}](x,\delta,t) \,dx \,dt,
\end{align*}
where, with slight abuse of notation, we write $\p_{n+1}^s \ov (x,\delta,t):=c_s \delta^{1-2s}\p_{n+1}\ov|_{(x,\delta,t)}$ and $\p_{n+1}^s\ov(x,0,t)=c_s \p_{n+1}^s \ov(x,t)$ (in the sense of Section \ref{sec:qual}).
As in \cite{FZ00} we rewrite our original functional from \eqref{eq:functional} as
\begin{align*}
\Je(v) &= \Jes(v) + \frac{\epsilon}{2} \|v\|_{L^2(B_1 \times (-1,1))} \\
 &\qquad + \int\limits_{B_1 \times (-1,1)} h (-\p_t + \p_{n+1}^s)[\overline{\varphi}(x,\delta,t)-\overline{\varphi}(x,0,t)] \,dx \,dt.
\end{align*}
Here we used that $\ov(x,0,t) = \varphi(x,t)$ and that $\varphi$ solves \eqref{eq:dual}. If we can ensure that
\begin{align}
\label{eq:pos}
\frac{\epsilon}{2} \|v\|_{L^2(B_1 \times (-1,1))} + \int\limits_{B_1 \times (-1,1)} h (-\p_t + \p_{n+1}^s)[\overline{\varphi}(x,\delta,t)-\overline{\varphi}(x,0,t)] \,dx \,dt \geq 0,
\end{align}
we then obtain that
\begin{align*}
I_1:= \min\limits_{v\in L^2(B_1 \times (-1,1))}\Je(v)\geq I_2 := \inf\limits_{v\in L^2(B_1 \times (-1,1))}\Jes(v).
\end{align*}
Since by Lemma \ref{lem:proof2}, $\|f\|_{L^2(W \times (-1,1))}^2 \leq -2 \min\,\Je(v) = -2 I_1$, this translates into
\begin{align*}
\|f\|_{L^2(W \times (-1,1))}^2\leq -2I_2 .
\end{align*}
It thus remains to estimate $I_2$ and to ensure \eqref{eq:pos}. We split the argument for this into two steps.\\

\emph{Step 1: Estimate for $I_2$.}
This follows from Proposition \ref{prop:small_s} (applied with $W/2$)
and the assumption that $h \in H^1_0(B_1 \times (-1,1))$. Indeed, if $\delta \in (0,1/2)$, we have
\begin{align*}
&\int\limits_{B_1 \times (-1,1)} h(x,t) [(-\p_t + \p_{n+1}^s)\overline{\varphi}](x,\delta,t) \,dx \,dt \\
&= \int\limits_{B_1 \times (-1,1)} (\p_t h)(x,t) \ov(x,\delta,t) \,dx \,dt + 
\int\limits_{B_1 \times (-1,1)}  h(x,t) \p_{n+1}^s \ov(x,\delta,t) \,dx \,dt \\
&\leq \|h\|_{H^1(B_1 \times (-1,1))} (\|\ov\|_{L^2(B_1  \times \{\delta\}\times (-1,1) )} + \|\partial_{n+1}^s \ov\|_{L^2(B_1  \times \{\delta\}\times (-1,1) )}) \\
&\leq \|h\|_{H^1(B_1 \times (-1,1))} (C_1 \delta^{s-1} \norm{\partial_{n+1}^s \ov}_{L^2(W/2 \times \{0\} \times (-1,1))}^{\mu_1 \delta^{\sigma_1}} \| v\|_{L^2(B_1 \times (-1,1))}^{1-\mu_1 \delta^{\sigma_1}}\\
 &\hspace{100pt} + C_2 \delta^{-s} \norm{\partial_{n+1}^s \ov}_{L^2(W/2 \times \{0\} \times (-1,1))}^{\mu_2 \delta^{\sigma_2}} \| v\|_{L^2(B_1 \times (-1,1))}^{1-\mu_2 \delta^{\sigma_2}}).
\end{align*}
Applying Young's inequality in the form $ab \leq (p'r)^{-p/p'} a^p/p + r b^{p'}$, and choosing $r = \tilde{\epsilon}/4$ where $\tilde{\epsilon} = \|h\|_{H^1(B_1 \times (-1,1))}^{-1} \epsilon$, implies that the first term on the right satisfies 
\begin{align*}
 &C_1 \delta^{s-1} \norm{\partial_{n+1}^s \ov}_{L^2(W/2 \times \{0\} \times (-1,1))}^{\mu_1 \delta^{\sigma_1}} \| v\|_{L^2(B_1 \times (-1,1))}^{1-\mu_1 \delta^{\sigma_1}} \\
 &\leq \left( \frac{\tilde{\epsilon}}{4(1-\mu_1 \delta^{\sigma_1})} \right)^{-\frac{1-\mu_1 \delta^{\sigma_1}}{\mu_1 \delta^{\sigma_1}}} (\mu_1 \delta^{\sigma_1}) (C_1 \delta^{s-1})^{1/(\mu_1 \delta^{\sigma_1})} \norm{\partial_{n+1}^s \ov}_{L^2(W/2 \times \{0\} \times (-1,1))} \\
 &\hspace{30pt} + \frac{\tilde{\epsilon}}{4} \| v\|_{L^2(B_1 \times (-1,1))} \\ 
 &\leq e^{C(1+|\log(\tilde{\epsilon})|)/\delta^{\sigma}} \norm{\partial_{n+1}^s \ov}_{L^2(W/2 \times \{0\} \times (-1,1))} + \frac{\tilde{\epsilon}}{4} \| v\|_{L^2(B_1 \times (-1,1))}.
\end{align*}
Arguing similarly for the second term, and recalling the definition of $\tilde{\epsilon}$, we obtain 
\begin{align*}
&\int\limits_{B_1 \times (-1,1)} h(x,t) [(-\p_t + \p_{n+1}^s)\overline{\varphi}](x,\delta,t) \,dx \,dt \\
&\quad \leq \|h\|_{H^1(B_1 \times (-1,1))} e^{C(1+|\log(\epsilon \|h\|_{H^{1}(B_1 \times (-1,1))}^{-1})|)/\delta^{\sigma}} \|\p_{n+1}^s\ov\|_{L^2(W/2 \times \{0\} \times (-1,1))}\\
& \quad \quad + \frac{\epsilon}{2}\|v\|_{L^2(B_1 \times (-1,1))} .
\end{align*}
Therefore, Young's inequality, and the fact that $\eta=1$ on $W/2$, yield 
\begin{align}
\label{eq:est1}
\begin{split}
I_2 
&\geq \inf\limits_{v\in L^2(B_1 \times (-1,1))}\left(
\frac{1}{2}\|\eta(-\D)^s \varphi \|_{L^2(W\times (-1,1))}^2 + \frac{\epsilon}{2}\|v\|_{L^2(B_1 \times (-1,1))} \right.\\
& \quad
- \|h\|_{H^1(B_1 \times (-1,1))} e^{C(1+|\log(\epsilon \|h\|_{H^{1}(B_1 \times (-1,1))}^{-1})|)/\delta^{\sigma}} \|(-\D)^s \varphi\|_{L^2(W/2 \times \{0\} \times (-1,1))}\\
& \quad \left. - \frac{\epsilon}{2}\|v\|_{L^2(B_1 \times (-1,1))} \right) \\
& \geq -  e^{C(1+|\log(\epsilon \|h\|_{H^1(B_1 \times (-1,1))}^{-1})|)/\delta^{\sigma}}\|h\|_{H^1(B_1 \times (-1,1))}^2.
\end{split}
\end{align}

\emph{Step 2: Ensuring \eqref{eq:pos}.}
In order to conclude the proof of Theorem \ref{prop:cost}, it suffices to ensure that \eqref{eq:pos} is satisfied and to deduce from this the resulting requirements on $\epsilon$ and $\delta$. To this end, we observe that
\begin{equation}
\label{eq:rel_d_e}
\begin{split}
&I:=\int\limits_{B_1 \times (-1,1)} h (-\p_t + \p_{n+1}^s)[\overline{\varphi}(x,\delta,t)-\overline{\varphi}(x,0,t)] \,dx \,dt \\
& = \int\limits_{B_1 \times (-1,1)} (\p_t h) [\overline{\varphi}(x,\delta,t)-\overline{\varphi}(x,0,t)] \,dx \,dt
+ \int\limits_{B_1 \times (-1,1)} h \p_{n+1}^s[\overline{\varphi}(x,\delta,t)-\overline{\varphi}(x,0,t)] \,dx \,dt,
\end{split}
\end{equation}
where we integrated by parts. 
We discuss these contributions separately in the sequel.\\
On the one hand, the fundamental theorem of calculus yields
\begin{align*}
&\left| \int\limits_{B_1 \times (-1,1)} \p_t h (\ov(x,\delta,t)-\ov(x,0,t)) \,dx \,dt \right|
= \left| \int\limits_{B_1 \times (-1,1)} \p_t h \int\limits_{0}^{\delta}\p_{z} \ov(x,z,t) \,dz \,dx \,dt \right|\\
& = \left| \int\limits_{B_1 \times (-1,1)} \p_t h \int\limits_{0}^{\delta}z^{\frac{2s-1}{2}}z^{\frac{1-2s}{2}}\p_{z} \ov(x,z,t) \,dz \,dx \,dt \right| \\
& \leq (2s)^{-1/2} \delta^{s} \| \p_t h \|_{L^2(B_1 \times (-1,1))} \|x_{n+1}^{\frac{1-2s}{2}}\p_{n+1} \ov \|_{L^2(\R^{n+1}_+\times(-1,1))}\\
& \leq C \delta^{s} \| \p_t h \|_{L^2(B_1 \times (-1,1))} \|v \|_{L^2(B_1 \times(-1,1))}.
\end{align*}
In the last line we here used the energy estimate \eqref{eq:global} to infer the bound
\begin{align*}
 \|x_{n+1}^{\frac{1-2s}{2}}\p_{n+1} \ov \|_{L^2(\R^{n+1}_+\times(-1,1))} \leq C \|v\|_{L^2(\R^n)}.
\end{align*}
On the other hand,
\begin{align*}
&\left| \int\limits_{B_1 \times (-1,1)} h  \p_{n+1}^s[\overline{\varphi}(x,\delta,t)-\overline{\varphi}(x,0,t)] \,dx \,dt \right| \\
& = \left| \int\limits_{B_1 \times (-1,1)} h \int\limits_{0}^{\delta} 
\p_z (z^{1-2s} \p_z \overline{\varphi}) \,dz \,dx \,dt \right|
 = \left| \int\limits_{B_1 \times (-1,1)} h \int\limits_{0}^{\delta} 
z^{1-2s} \D' \overline{\varphi} \,dz \,dx \,dt \right| \\ 
& = \left| \int\limits_{B_1 \times (-1,1)} \nabla' h \cdot \int\limits_{0}^{\delta} 
z^{\frac{1-2s}{2}} z^{\frac{1-2s}{2}} \nabla' \overline{\varphi} \,dz \,dx \,dt \right| \\ 
& \leq \|\nabla' h\|_{L^2(B_1 \times (-1,1))} 
(2-2s)^{-1/2} \delta^{1-s} \norm{z^{\frac{1-2s}{2}} \nabla' \ov}_{L^2(\R^{n+1}_+ \times (-1,1))} \\
& \leq C \delta^{1-s} \|\nabla' h\|_{L^2(B_1 \times (-1,1))} \|v \|_{L^2(B_1 \times(-1,1))}.
\end{align*}
Thus, inserting this into \eqref{eq:pos}, we obtain the following condition on $\delta, \epsilon$:
\begin{align*}
0<\delta \leq \left( \frac{\epsilon}{C\|h\|_{H^1(B_1 \times (-1,1))} + 1} \right)^{\frac{1}{\max\{s,1-s\}}}.
\end{align*}
Defining $\delta$ as saturating the upper bound in this estimate and plugging it into \eqref{eq:est1} then finally results in
\begin{align*}
\|f\|_{L^2(W \times (-1,1))} \leq C \exp\left( \frac{1+\|h\|_{H^1(B_1 \times (-1,1))}}{\epsilon} \right)^{\sigma}\|h\|_{H^1(B_1 \times (-1,1))},
\end{align*}
where $C > 1$ and $\sigma>0$ depend on $n$, $s$, and $W$.
\end{proof}

As an immediate consequence of Lemmas \ref{lem:proof1}-\ref{lem:proof3} we infer the result of Theorem \ref{prop:cost}.

\begin{proof}[Proof of Theorem \ref{prop:cost}]
Theorem \ref{prop:cost} follows by combining Lemmas \ref{lem:proof1}-\ref{lem:proof3}.
\end{proof}

\section{Extensions to More General Operators}
\label{sec:extend}

The arguments presented in Sections \ref{sec:qual}-\ref{sec:quant} extend to a much more general class of operators. In the sequel, we briefly comment on some of these.

\subsection{Qualitative approximation}
As already pointed out in Remark \ref{rmk:loc_nonloc} the \emph{qualitative} approximation argument does not use any regularizing properties of the underlying (nonlocal) equation. It only exploits the weak unique continuation properties of the fractional Laplacian and is hence a purely nonlocal phenomenon (in the sense that the unique continuation properties of the nonlocal operator determine the approximation properties independently of which additional local contributions are involved in the equation). Provided that the associated problem is well-posed (i.e. that the boundary data are prescribed correctly), it is therefore possible to prove these qualitative approximation properties for general operators of the form $L + (-\D)^s $, where $L$ is an arbitrary \emph{local} differential operator. This recovers (a part of) the result of \cite{DSV16}.

In general, qualitative approximation results which are obtained by means of the Runge 
approximation, require two ingredients:
\begin{itemize}
\item[(a)] well-posedness of the underlying equation and its adjoint,
\item[(b)] weak unique continuation for the associated nonlocal operator.
\end{itemize}
We again emphasize that in (b) only the weak unique continuation properties of the \emph{nonlocal} operator are of relevance.
As the weak unique continuation property is such a crucial ingredient, it is an interesting question to ask for which nonlocal operators it is valid. A large class of operators for which this holds is identified by Isakov: 

\begin{lem}[\cite{Isakov}, Lemma 3.5.4]
\label{lem:Isakov}
Let $\mu_j $, $j\in \{1,2\}$, be measures with $\supp(\mu_j) \subset B_r$. Let $E \in \mathcal{S}'(\R^n)$. Assume that
$\F(E)$ cannot be written as the sum of a meromorphic function (in $\C^n$) and a distribution supported on the zero set of some nontrivial entire function. Then if $E\ast (\mu_1-\mu_2) = 0 $ in $\R^n \setminus B_r$, we have that $\mu_1 = \mu_2$ globally.
\end{lem}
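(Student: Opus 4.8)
The plan is to reduce the statement to a Paley--Wiener argument applied to $\mu := \mu_1 - \mu_2$. Since $\mu_1,\mu_2$ are finite measures with support in $\overline{B}_r$, so is $\mu$, and by injectivity of the Fourier transform on $\mathcal{S}'(\R^n)$ it suffices to prove that the entire function $\widehat{\mu}$ vanishes identically. First I would record the relevant properties of $\widehat{\mu}$: by the Paley--Wiener theorem it extends to an entire function on $\C^n$ with $|\widehat{\mu}(\zeta)| \leq C e^{r|\Imm \zeta|}$, and since $x^{\alpha}\,d\mu$ is again a finite measure supported in $\overline{B}_r$ for every multi-index $\alpha$, the function $\widehat{\mu}$ together with all of its derivatives is bounded on $\R^n$; in particular $\widehat{\mu} \in \mathcal{O}_M(\R^n)$ is a multiplier on $\mathcal{S}'(\R^n)$.

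Next I would bring in the hypothesis $E \ast \mu = 0$ in $\R^n \setminus B_r$. The convolution $u := E \ast \mu$ is a well-defined tempered distribution (a tempered distribution convolved with a compactly supported one), and by assumption it vanishes outside $\overline{B}_r$, so $u \in \mathcal{E}'(\R^n)$. The Paley--Wiener--Schwartz theorem then shows that $\widehat{u}$ extends to an entire function $G$ on $\C^n$ of finite exponential type, with polynomial growth on $\R^n$. On the other hand, in the multiplier sense one has $\widehat{u} = \F(E)\cdot\widehat{\mu}$, so we arrive at the key identity
\[
\F(E)\cdot\widehat{\mu} = G \qquad \text{on } \R^n ,
\]
with $G$ entire.

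Now I would argue by contradiction: suppose $\widehat{\mu} \not\equiv 0$. Then $\widehat{\mu}$ is a nontrivial entire function; set $Z := \{\zeta \in \C^n : \widehat{\mu}(\zeta) = 0\}$ and $Z_{\R} := Z \cap \R^n$, a closed subset of $\R^n$ of Lebesgue measure zero (the real zero set of a real-analytic function that is not identically zero). The quotient $m := G/\widehat{\mu}$ is a meromorphic function on $\C^n$, being a global quotient of two entire functions. On the open dense set $\R^n \setminus Z_{\R}$, where $\widehat{\mu}$ is a nonvanishing smooth function, the identity above forces $\F(E)$ to coincide there with the real-analytic function $m$. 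Hence $\nu := \F(E) - m$ is supported in $Z_{\R} = \{x \in \R^n : \widehat{\mu}(x) = 0\}$, i.e.\ on the real zero set of the nontrivial entire function $\widehat{\mu}$. Thus $\F(E) = m + \nu$ is precisely a decomposition of the type excluded by the hypothesis, a contradiction. Therefore $\widehat{\mu} \equiv 0$, so $\mu = 0$, i.e.\ $\mu_1 = \mu_2$.

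I expect the main obstacle to be the final step: making precise the sense in which $\F(E)$ agrees with the meromorphic function $m = G/\widehat{\mu}$ away from the thin set $Z_{\R}$, and hence contradicts the hypothesis. The function $m$ need not be locally integrable across its polar set, so $m$ alone need not define a distribution on all of $\R^n$; the decomposition $\F(E) = m + \nu$ must therefore be understood in the localized form that $\F(E) - m$ has support contained in $Z_{\R}$, obtained by testing only against functions supported off $Z_{\R}$, and one must verify that this is exactly the kind of decomposition the hypothesis rules out. The auxiliary facts --- that a global quotient of two entire functions is meromorphic on $\C^n$, and that the real points of its polar set form a closed null set --- are classical but deserve explicit statement. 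The remaining inputs, namely Paley--Wiener(--Schwartz), the multiplier property of $\widehat{\mu}$, and injectivity of $\F$, are routine.
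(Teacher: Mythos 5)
Your proof is correct and follows essentially the same route as the paper's: Paley--Wiener applied to $\mu_1-\mu_2$ and to $E\ast(\mu_1-\mu_2)$, division of the two entire Fourier transforms to produce the meromorphic function, and the observation that the remainder is supported on the zero set of $\F(\mu_1-\mu_2)$, contradicting the hypothesis unless $\F(\mu_1-\mu_2)\equiv 0$. The technical point you flag at the end --- giving distributional meaning to the quotient $G/\widehat{\mu}$ across its polar set --- is handled in the paper by citing \L{}ojasiewicz's division theorem, which shows that such a meromorphic quotient of entire functions does define an element of $\mathcal{D}'(\R^n)$.
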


For convenience, we recall the proof of Isakov.
 
\begin{proof}
As $\mu_j$, $j\in\{1,2\}$, and $E\ast (\mu_1 -\mu_2)$ are compactly supported, the Paley-Wiener theorem asserts that
$\F(\mu_1 - \mu_2)$ and $\F(E \ast (\mu_1 -\mu_2))$ are analytic functions. But we have that
\begin{align*}
\F(E\ast (\mu_1 - \mu_2)) = \F(E) \F(\mu_1 - \mu_2).
\end{align*}
Thus, on the set in which the entire function $\F(\mu_1 - \mu_2)$ does not vanish, we have that
\begin{align*}
\F(E) = \frac{\F(E\ast (\mu_1 - \mu_2))}{\F(\mu_1 - \mu_2)}.
\end{align*}
The right hand side is by definition a meromorphic function in $\C^n$ (and thus by \cite{Lojasiewicz} defines an element of $\mathcal{D}'(\R^n)$). 
As a consequence, $\F(E)$ can be written as
\begin{align*}
\F(E) = \frac{\F(E\ast (\mu_1 - \mu_2))}{\F(\mu_1 - \mu_2)} 
+ h,
\end{align*}
where the first term on the right hand side is a meromorphic function, while the second term $h$ is a distribution supported on the zero set of the entire function $\F(\mu_1 - \mu_2)$. This is a contradiction to the assumption of the lemma unless $\F(\mu_1 - \mu_2) = 0$ globally.
\end{proof}

Due to the presence of a branch-cut, Isakov's lemma for instance applies to operators of the form
$L:\mathcal{D}(\R^{m_1}\times \dots \times \R^{m_k}) \rightarrow \mathcal{D}(\R^{m_1}\times \dots \times \R^{m_k})$ given by
\begin{align}
\label{eq:op_nonloc}
\bar{L}= \sum\limits_{j=1}^{k} a_{j}(-\D_{X_j})^{s_j},
\end{align}  
for $X_j \in \R^{m_j}$, $a_j \in \R$ and $s_j \in (0,1)$. In particular, these operators need not be elliptic. We will give the proof for more general operators of the form 
\begin{align*}
\label{eq:op_nonloc_1}
\tilde{L}:=(-\D_{X_1})^{s_1} + m(D_{X_2}), 
\end{align*}
where $m(D_{X_2})$ is a Fourier multiplier in the $X_2$ variable with at most polynomial growth in Fourier space, i.e., there exists $N\in \N$ such that
\begin{align*}
\mathcal{F} (m(D_{X_2})u) = m(\eta) \hat{u}(\eta), \quad |m(\eta)| \leq C(1+|\eta|)^N.
\end{align*}

\begin{cor}
\label{cor:frac1}
Let $s_1 \in (0,1)$, $n_1, n_2 \in \N\cup \{0\}$, $n_1 \geq 1$ and let $m(D_{X_2})$ be a Fourier multiplier.
Let $n = n_1 + n_2$ and $\varphi:\R^n= \R^{n_1}\times \R^{n_2} \rightarrow \R$, $\varphi \in H^{-s}(\R^n)$ for some $s > 0$, be such that for some $r > 0$
\begin{align*}
\supp(((-\D_{X_1})^{s_1} + m(D_{X_2}))  \varphi ) \subset B_{r}, \quad \supp(\varphi) \subset B_{r},
\end{align*}
where $X=(X_1,X_2) \in \R^{n_1}\times \R^{n_2}$.
Then we have that $\varphi =0$.
\end{cor}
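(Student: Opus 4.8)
The plan is to reduce the claim to Isakov's Lemma \ref{lem:Isakov} by encoding the pair $\varphi$, $(\tilde{L}\varphi)$ as (the Fourier transforms of) compactly supported distributions and identifying $\tilde L$ as convolution with an appropriate fundamental solution. First I would record that since $\varphi \in H^{-s}(\R^n)$ has $\supp(\varphi)\subset B_r$ and $\supp(\tilde L\varphi)\subset B_r$, both $\varphi$ and $\tilde L \varphi$ are compactly supported tempered distributions, so by Paley--Wiener their Fourier transforms $\hat\varphi$ and $\widehat{\tilde L\varphi}$ extend to entire functions on $\C^n$ of exponential type. On the Fourier side the operator acts as multiplication by the symbol
\begin{align*}
p(\xi) = p(\xi_1,\xi_2) = \abs{\xi_1}^{2s_1} + m(\xi_2), \qquad \xi_1 \in \R^{n_1}, \ \xi_2 \in \R^{n_2},
\end{align*}
so that $\widehat{\tilde L\varphi}(\xi) = p(\xi)\hat\varphi(\xi)$. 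The role of the hypothesis $((-\D_{X_1})^{s_1}+m(D_{X_2}))\varphi = 0$ outside $B_r$ --- which here actually follows from $\supp(\tilde L\varphi)\subset B_r$ and $\supp\varphi\subset B_r$ being the \emph{same} ball, so the statement is really a unique continuation statement with trivial Cauchy data --- is to let us run Isakov's dichotomy.

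The key step is the analytic structure of $p$. Because $n_1 \geq 1$, the function $\xi_1 \mapsto \abs{\xi_1}^{2s_1}$ is \emph{not} the restriction of a meromorphic function on $\C^{n_1}$: continued into the complex domain it develops a genuine branch cut (the non-integer power $2s_1 \notin 2\N$ of the quadratic form $\xi_1\cdot\xi_1$), and adding the polynomially-bounded multiplier $m(\xi_2)$ in the remaining variables cannot cancel this singularity. I would make this precise by fixing generic values of $\xi_2$ and of all but one component of $\xi_1$ and examining the one-variable slice, where $z\mapsto (z^2 + c)^{s_1} + \text{const}$ has a branch point; a meromorphic function on $\C^n$ restricted to such a complex line is meromorphic in one variable, which $(z^2+c)^{s_1}$ is not. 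Likewise $1/p$, restricted to a suitable line, is not of the form "meromorphic plus distribution supported on the zero set of an entire function": the reciprocal of a function with a branch cut still has a branch cut. This is exactly the hypothesis needed to invoke Lemma \ref{lem:Isakov} (in the guise used in Corollary \ref{cor:frac1}, i.e.\ the distributional version rather than the measure-theoretic one), or more directly to argue as in its proof: if $\hat\varphi \not\equiv 0$ then on the (open, dense) set where $\hat\varphi \neq 0$ we could write $p = \widehat{\tilde L\varphi}/\hat\varphi$, exhibiting $p$ as a quotient of two entire functions, hence meromorphic on $\C^n$ away from the zero set of $\hat\varphi$; combined with the Paley--Wiener/\L ojasiewicz division this would force $p$ to be meromorphic plus a distribution supported on $\{\hat\varphi = 0\}$, contradicting the branch-cut structure. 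Therefore $\hat\varphi \equiv 0$, i.e.\ $\varphi = 0$.

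The main obstacle I anticipate is the rigorous handling of the branch-cut argument in several complex variables together with the low regularity $\varphi \in H^{-s}$: one must be careful that $\hat\varphi$ and $\widehat{\tilde L\varphi}$ are genuinely entire (Paley--Wiener--Schwartz for compactly supported distributions handles this) and that the pointwise identity $\widehat{\tilde L\varphi} = p\,\hat\varphi$ holds as an identity of entire functions on all of $\C^n$, not merely of tempered distributions on $\R^n$ --- this requires knowing that $m(\xi_2)$ itself extends holomorphically, or else working only on $\R^n$ and invoking Isakov's lemma as a black box exactly as stated. A secondary technical point is to confirm that adding $m(D_{X_2})$, which may be an arbitrary polynomially-bounded multiplier, genuinely cannot destroy the branch singularity: since $m$ depends only on $\xi_2$ while the branch point of $\abs{\xi_1}^{2s_1}$ lives in the $\xi_1$-variables, fixing $\xi_2$ isolates the singular behaviour and makes this transparent, but it should be spelled out. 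Apart from that, the argument is a direct adaptation of the proof of Isakov's lemma recalled above, with the verification of its non-meromorphicity hypothesis for the symbol $p$ being the only real content.
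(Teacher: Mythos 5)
Your proposal is correct and follows essentially the same route as the paper: Paley--Wiener gives entire extensions of $\hat\varphi$ and $\widehat{\tilde L\varphi}$, the quotient exhibits the symbol as holomorphic off the zero set of $\hat\varphi$, and restricting to a one-complex-variable slice $k_1\mapsto (k_1^2+|k_0|^2)^{s_1}+m(\eta_0)$ produces the branch-cut contradiction. The paper likewise argues directly ``in the style of'' Isakov's lemma rather than invoking it as a black box (since the lemma as stated concerns measures), working on an annulus centered at the branch point $i|k_0|$ that avoids the discrete zero set of the slice of $\hat\varphi$; this is exactly the precise version of the slicing step you sketch.
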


\begin{figure}[t]
\includegraphics[scale =0.8]{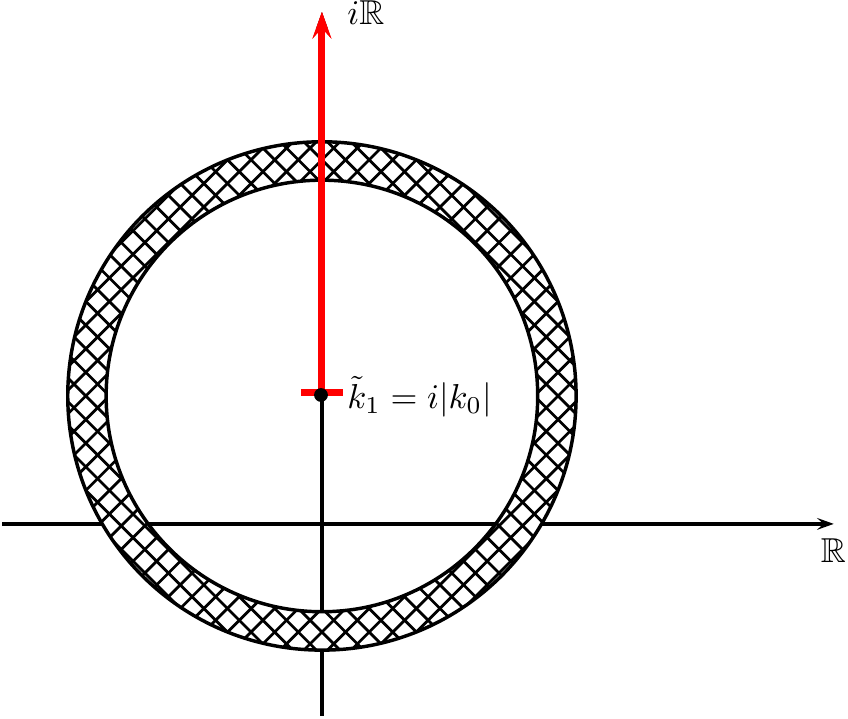}
\caption{The annulus from the proof of Corollary \ref{cor:frac1}. The shaded area corresponds to the annulus $A_{R_1(\tilde{k}_1),R_2(\tilde{k}_1)}(\tilde{k}_1)$ with $\tilde{k}_1$ chosen to be $i|k_0|$. The red line indicates the discontinuity line of the function $e^{s_1 \log(k_1^2 + |k_0|^2)}$.}
\label{fig:complex}
\end{figure}

\begin{proof}
Instead of reducing the corollary to the statement of Lemma \ref{lem:Isakov}, we prove it directly by a similar argument. By virtue of our assumptions and by the Paley-Wiener theorem, we first infer that the functions $\hat{\varphi}(k,\eta)$ and $(|k|^{2 s_1}+ m(\eta)) \hat{\varphi}(k,\eta)$ are real analytic and have entire analytic extensions into $\C^n$. With slight abuse of notation, we do not change the notation for the analytic extensions, i.e., for instance the function $\hat{\varphi}(k,\eta)$ denotes both the original function defined on $\R^n$ and its analytic extension onto $\C^n$ (which of course is consistent by restriction).

Let us next assume that the statement of the corollary were wrong, i.e.\ that $\varphi \not\equiv 0$ as a function on $\R^n$ and hence also $\hat{\varphi}\not\equiv 0$ as a function on $\C^n$. This implies that there exists a vector $\xi'=(k_0,\eta_0) \in \R^{n_1-1}\times \R^{n_2} = \R^{n-1}$ such that $\hat{\varphi}(k_1, \xi')\not\equiv 0$ as a function of $k_1 \in \R$ (and hence also as a function of $k_1 \in \C$). As $\hat{\varphi}$ is analytic as a function in each of its variables, this entails that $\hat{\varphi}(k_1, \xi')$, as a function on $\C$, only has a countable discrete set $Z \subset \C$ of zeroes. In particular, for each $\tilde{k}_1 \in \C$ there exist radii $R_1(\tilde{k}_1) > R_2(\tilde{k}_1)>|\tilde{k}_1|$ such that on the open annulus $A_{R_1(\tilde{k}_1),R_2(\tilde{k}_1)}(\tilde{k}_1):=B_{R_1(\tilde{k}_1)}(\tilde{k}_1)\setminus \overline{B_{R_2}(\tilde{k}_1)}$ centered at $\tilde{k}_1\in \C$ the function $\hat{\varphi}(k_1, \xi')$ does not have any zeroes and such that $A_{R_1(\tilde{k}_1),R_2(\tilde{k}_1)}(\tilde{k}_1) \cap \R$ is a relatively open, nonempty set (else it would be possible to construct an accumulation point of zeroes by considering a decreasing sequence $(R_2^{(j)})$ of radii with $R_2^{(j)} \to R_1$ and by invoking the theorem of Bolzano-Weierstra{\ss}). But for some analytic function $g(\xi)$ with $\xi'=(k_0,\eta_0)$ we have that 
\begin{align*}
(|(k_1,k_0)|^{2 s_1} +m(\eta_0)) \hat{\varphi}(k_1, \xi') = g(k_1,\xi')  \mbox{ on } A_{R_1(\tilde{k}_1),R_2(\tilde{k}_1)}(\tilde{k}_1) \cap \R.
\end{align*}
Therefore, on the one hand, for each $\tilde{k}_1\in \C$ we can define an analytic continuation of the function $f(k_1):=|(k_1,k_0)|^{2 s_1} + m(\eta_0)$ on $A_{R_1(\tilde{k}_1),R_2(\tilde{k}_1)}(\tilde{k}_1)$ by setting $f(k_1)= \frac{g(k_1,\xi')}{\hat{\varphi}(k_1,\xi')}$. This defines a holomorphic function on $A_{R_1(\tilde{k}_1),R_2(\tilde{k}_1)}(\tilde{k}_1)$. On the other hand, for the standard choice of the logarithm (where the branch cut is located on the negative real axis), the function $k_1 \mapsto e^{s_1 \log(k_1^2 + |k_0|^2) } + m(\eta_0)$ is analytic in $\C \setminus \{ \pm i\alpha \,;\, \alpha \geq |k_0| \}$ and hence this function is also obtained by analytic continuation from the restriction of $f(k_1)$ onto $ A_{R_1(\tilde{k}_1),R_2(\tilde{k}_1)}(\tilde{k}_1) \cap \R$.
By uniqueness of the analytic extension we thus deduce that 
\begin{align*}
f(k_1)= e^{s_1 \log(k_1^2 + |k_0|^2) } + m(\eta_0)
\mbox{ on } A_{R_1(\tilde{k}_1),R_2(\tilde{k}_1)}(\tilde{k}_1) \setminus \{ \pm i\alpha \,;\, \alpha \geq |k_0| \}.
\end{align*} 
But as the logarithm is discontinuous at its branch points on $\R_{-}\times\{0\} \subset \C$ and as $s_1 \in (0,1)$, the function $e^{s_1 \log(k_1^2 + |k_0|^2)}$ is discontinuous along the line $i\R_+ + i |k_0| \subset \C$ (c.f.\ Figure \ref{fig:complex}). If we choose $\tilde{k}_1 = i |k_0|$, this yields a contradiction to the analyticity of $f(k_1)$ on $A_{R_1(\tilde{k}_1),R_2(\tilde{k}_1)}(\tilde{k}_1)$.

Thus, the contradiction assumption must have been wrong and hence $\varphi=0$, proving the desired result.
\end{proof}

\begin{rmk}
We remark that technically an important ingredient in our argument was the reduction to the one-dimensional situation, which allowed us to invoke properties of holomorphic functions in a single complex variable instead of working with several complex variables.
\end{rmk}

\begin{rmk}
The requirement $s_1 \in (0,1)$ can be relaxed; all powers $s\in \R$, which ensure the presence of a branch-cut for the continuation of $|\xi|^{2s}$ can be used in the argument from above.
\end{rmk}

As discussed in \cite{Isakov}, Lemma \ref{lem:Isakov} does not only apply to the specific class of nonlocal operators from \eqref{eq:op_nonloc}, but also to other interesting operators.

If the underlying equations are well-posed, the Runge-type arguments from above yield for instance the following Corollary: 

\begin{cor}
\label{cor:Isakov}
Let $s_1\in(0,1)$, $n, n_1, n_2 \in \N$ with $n=n_1+n_2$ and let $B_1 \subset \R^{n_1}\times \R^{n_2}$.
Let $\tilde{L}$ be as in \eqref{eq:op_nonloc} where the Fourier multiplier $m$ is real, i.e. $m(\eta)\in \R$ for all $\eta \in \R^{n_2}$. Assume that for some $s > 0$, $\tilde{L}$ is bounded $H^s(\R^n) \to H^{-s}(\R^n)$, and that the problem 
\[
\tilde{L} h = v \mbox{ in } B_1,\ h = 0 \mbox{ in } \R^n \setminus \overline{B}_1
\]
has a unique solution $h \in H^s(\R^n)$ for any function $v \in L^2(B_1)$. Denote by $P_{\tilde{L}}f: C^{\infty}_c(\R^n \setminus \overline{B}_1) \rightarrow H^s(\R^n)$ the corresponding solution operator to the problem
\[
\tilde{L} u = 0 \mbox{ in } B_1,\ u = f \mbox{ in } \R^n \setminus \overline{B}_1.
\]
Then we have that for any $R>1$ the set
\begin{align*}
\mathcal{R}:=\{u|_{B_1}: u = P_{\tilde{L}}f, \ f \in C_c^{\infty}(\R^n \setminus \overline{B}_R)\}
\end{align*}
is dense in $L^2(B_1)$.
\end{cor}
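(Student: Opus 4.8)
The plan is to mimic the proof of Theorem~\ref{prop:approx_qual}, replacing the weak unique continuation statement for the fractional Laplacian with Corollary~\ref{cor:frac1}. By the Hahn--Banach theorem, it suffices to show that if $v \in L^2(B_1)$ satisfies $(P_{\tilde{L}}f, v)_{L^2(B_1)} = 0$ for all $f \in C^\infty_c(\R^n \setminus \overline{B}_R)$, then $v \equiv 0$. First I would introduce the adjoint problem: since $\tilde{L}$ is a self-adjoint Fourier multiplier (here one uses that $m$ is real and that $(-\Delta_{X_1})^{s_1}$ is self-adjoint), the dual problem is $\tilde{L}\varphi = v$ in $B_1$, $\varphi = 0$ in $\R^n \setminus \overline{B}_1$, which is well-posed by hypothesis.

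Next I would run the integration-by-parts computation exactly as in \eqref{ps_adjoint}: writing $u = P_{\tilde{L}}f$ and using that $u - f$ is supported in $\overline{B}_1$, that $\tilde{L}$ is self-adjoint, and that $\tilde{L}u = 0$ in $B_1$ while $\varphi$ vanishes outside $B_1$, one obtains
\begin{align*}
(P_{\tilde{L}}f, v)_{L^2(B_1)} = (u - f, \tilde{L}\varphi)_{L^2(\R^n)} = -(f, \tilde{L}\varphi)_{L^2(\R^n \setminus \overline{B}_1)}.
\end{align*}
Since this vanishes for all $f \in C^\infty_c(\R^n \setminus \overline{B}_R)$, we conclude $\tilde{L}\varphi = 0$ in $\R^n \setminus \overline{B}_R$. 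Together with $\varphi = 0$ in $\R^n \setminus \overline{B}_1 \supset \R^n \setminus \overline{B}_R$, and noting $\varphi \in H^s(\R^n)$ is supported in $\overline{B}_1 \subset \overline{B}_R$, we are in the setting of Corollary~\ref{cor:frac1} (with $r = R$): both $\varphi$ and $\tilde{L}\varphi$ are supported in $\overline{B}_R$. Hence $\varphi \equiv 0$, so $v = \tilde{L}\varphi = 0$, and by Hahn--Banach the density follows.

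The main obstacle is verifying that Corollary~\ref{cor:frac1} genuinely applies, i.e.\ that the support and regularity hypotheses are met after the reduction. One must ensure $\varphi \in H^{-s'}(\R^n)$ for some $s' > 0$ (immediate from $\varphi \in H^s(\R^n)$) and that $\supp \varphi$ and $\supp(\tilde{L}\varphi)$ both lie in a common ball $B_r$; here $\supp \varphi \subset \overline{B}_1$ and $\supp(\tilde{L}\varphi) \subset \overline{B}_R$, so taking $r$ slightly larger than $R$ works, since $\overline{B}_1 \subset \overline{B}_R$. A minor technical point is the rigorous justification of the integration by parts when $\tilde{L}$ is merely bounded $H^s \to H^{-s}$: this is handled by interpreting all pairings as the $H^{-s}$--$H^s$ duality and using that $f$ is smooth and compactly supported away from $B_1$, exactly as in the proof of Theorem~\ref{prop:approx_qual}. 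No regularizing properties of $\tilde{L}$ are needed, in keeping with Remark~\ref{rmk:loc_nonloc}.
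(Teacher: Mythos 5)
Your proposal is correct and follows essentially the same route as the paper: a Hahn--Banach reduction, introduction of the adjoint solution $\varphi$ (called $h$ in the paper) via the assumed well-posedness, the self-adjointness-based integration by parts yielding $(P_{\tilde{L}}f,v)=-(f,\tilde{L}\varphi)$, and the conclusion via Corollary \ref{cor:frac1} applied to $\varphi$ with both supports in a ball slightly larger than $B_R$. Your remarks on interpreting the pairings as $H^{s}$--$H^{-s}$ dualities and on the support bookkeeping match the paper's treatment.
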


\begin{proof}
Arguing similarly as in Theorem \ref{prop:approx_qual}, by a Hahn-Banach argument, the density result reduces to the weak unique continuation property of the nonlocal operator $\tilde{L}$ in $\R^n \setminus B_R$. This however follows from Corollary \ref{cor:frac1}. 

More precisely, we show that if $v\in L^2(B_1)$ is such that
$(P_{\tilde{L}}f,v)_{L^2(B_1)}=0$ for all $f\in C_c^{\infty}(\R^n \setminus \overline{B}_R)$, then necessarily $v=0$. Indeed, using the assumed well-posedness, we define $h\in \tilde{H}^s(\R^n)$ by the requirement
\begin{align*}
\tilde{L} h = v \mbox{ in } B_1,\ h = 0 \mbox{ in } \R^n \setminus \overline{B}_1.
\end{align*}
Then, 
\begin{align}
\label{eq:HB_1}
\begin{split}
0  & = (P_{\tilde{L}}f,v)_{L^2(B_1)}
= (P_{\tilde{L}}f, \tilde{L}h)_{L^2(B_1)}
= (P_{\tilde{L}}f, ((-\D_{X_1})^{s_1} + m(D_{X_2})) h)_{L^2(B_1)}\\
&= (P_{\tilde{L}}f - f, ((-\D_{X_1})^{s_1} + m(D_{X_2})) h)_{L^2(\R^n)}
= (((-\D_{X_1})^{s_1} + m(D_{X_2})) (P_{\tilde{L}}f - f),  h)_{L^2(\R^n)}\\
&= -(((-\D_{X_1})^{s_1} + m(D_{X_2}))  f,  h)_{L^2(\R^n)}
= -(  f,  ((-\D_{X_1})^{s_1} + m(D_{X_2})) h)_{L^2(\R^n)}.
\end{split}
\end{align}
Here we used that $P_{\tilde{L}}f - f \in \tilde{H}^s(B_1)$ and $h \in H^{-s}(\R^n)$, that $\tilde{L}$ is bounded $H^s \to H^{-s}$, and that $m(D_{X_2})$ is self-adjoint. As a consequence, we infer that
\begin{align*}
h = 0 \mbox{ in } \R^n \setminus \overline{B}_1,\ ((-\D_{X_1})^{s_1} + m(D_{X_2})) h \in \R^n \setminus \overline{B}_R.
\end{align*}
Corollary \ref{cor:frac1} then implies that $\varphi = 0$, which entails that $v=0$.
\end{proof}

\begin{rmk}
Assuming the validity of the corresponding well-posedness theory and further supposing that the local and nonlocal contributions act in different variables, it is straightforward to extend the statement of Corollary \ref{cor:Isakov} to a combination of local and nonlocal operators. This follows by observing that as in the case of the fractional heat equation, the local terms ``disappear" on the right hand side of the analogue of the duality argument outlined in \eqref{eq:HB_1}. As in the setting of the heat equation, the variables on which the local operators act are then simply treated as parameters in the unique continuation properties of the nonlocal operators.
\end{rmk}

\subsection{Further constant coefficient operators}
In contrast to the discussion on \emph{qualitative} approximation in Section \ref{sec:qual}, the arguments on the \emph{quantitative} approximation in Section \ref{sec:quant} also relied on properties of the underlying operator (including the local terms). Here we made use of two main ingredients: We combined
\begin{itemize} 
\item \emph{quantitative} weak unique continuation properties (where the main thrust originated from the nonlocal part of the operator), 
\item with specific (regularity) properties of the \emph{full underlying operator}, in the form of (global) energy estimates, c.f.\ \eqref{eq:global} and Remark \ref{rmk:quant}. 
\end{itemize}
These properties are for instance reflected in the respective norms of $h$, which arise in the estimate on the cost of approximation (c.f.\ the bounds in Step 3 in the proof of Theorem \ref{prop:cost}). While this entails that in contrast to the \emph{qualitative} approximation properties their \emph{quantitative} counterparts depend more delicately on the structure of the underlying operator -- also on the elliptic/parabolic/hyperbolic nature of the local part of the operator -- the overall strategy of proof is very robust. It can be applied to a large class of equations, including elliptic/parabolic/hyperbolic ones. To illustrate this, we remark that analogous arguments as outlined above with the same energy functional \eqref{eq:functional} (but where $\varphi$ now solves the dual problem for the fractional wave equation) lead to quantitative approximation properties for the fractional wave equation 
\begin{equation}
\label{eq:wave}
\begin{split}
(\p_t^2 + (-\D)^{s})u & = 0 \mbox{ in } B_1 \times (-1,1),\\
u & = f \mbox{ in } (\R^n \setminus \overline{B}_1) \times (-1,1),\\
u & = f, \ \p_t u = \p_t f \mbox{ on }  \R^n \times \{-1\}.
\end{split}
\end{equation}
Here $W\subset \R^n \setminus \overline{B}_1$ is a bounded Lipschitz set.
Arguing by a Galerkin approximation, this problem is well-posed. We consider the Poisson operator for \eqref{eq:wave}, 
\begin{align*}
P_s^w:L^2((-1,1), C^{\infty}_c(W)) \to L^2(B_1 \times (-1,1)), \ \ f \mapsto P_s^w f=u|_{B_1 \times (-1,1)}.
\end{align*}
In the setting of the wave equation the energy estimates replacing \eqref{eq:global} become
\begin{align*}
\sup\limits_{t\in[-1,1]} \left( \|\p_t \varphi\|_{L^2(B_1)} + \| \varphi\|_{H^s(B_1)} \right) \leq C \|v\|_{L^2((-1,1),L^2(B_1))}.
\end{align*}

As a consequence of the arguments leading to Theorem \ref{prop:cost}, we also infer a result on the cost of approximation for the wave equation:

\begin{thm}[Cost of approximation for the wave equation]
\label{prop:cost_1}
Let $h\in H^2_0(B_1 \times (-1,1))$ and $\epsilon>0$. Let $W \subset \R^n \setminus \overline{B}_1$ be a bounded Lipschitz domain with $\overline{W} \cap \overline{B}_1 = \emptyset$.
Then there exists a control function $f\in L^2((-1,1), C^{\infty}_c(W))$ such that
\begin{equation*}
\begin{split}
&\|h-P_s^w f\|_{L^2(B_1 \times (-1,1))} \leq \epsilon,\\ 
&\|f\|_{L^2(W \times (-1,1))}\leq C e^{C (1+\|h\|_{H^2(B_1 \times (-1,1))}^{\sigma})\epsilon^{-\sigma}}\|h\|_{H^2(B_1 \times (-1,1))},
\end{split}
\end{equation*}
for constants $C>1$ and $\sigma > 0$, which only depend on $n$, $s$, and $W$.
Moreover, we note that $f$ can be expressed in terms of the minimizer of the functional \eqref{eq:functional}.  
\end{thm}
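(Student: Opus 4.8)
The plan is to repeat, essentially word for word, the variational scheme of Section~\ref{sec:proof_main}, replacing the dual heat equation \eqref{eq:dual} by the dual \emph{wave} equation
\begin{equation*}
\begin{split}
(\p_t^2 +(-\D)^s)\varphi &= v \mbox{ in } B_1\times(-1,1),\\
\varphi &= 0 \mbox{ in } (\R^n\setminus\overline{B}_1)\times(-1,1),\\
\varphi = \p_t\varphi &= 0 \mbox{ in } \R^n\times\{1\},
\end{split}
\end{equation*}
which is well posed by the Galerkin argument indicated before the statement, with the energy bound $\sup_{t\in[-1,1]}(\|\p_t\varphi\|_{L^2(B_1)}+\|\varphi\|_{H^s(B_1)})\le C\|v\|_{L^2(B_1\times(-1,1))}$ playing the role of \eqref{eq:global}. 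Two structural observations make the transfer possible. First, integrating by parts \emph{twice} in time (rather than once) and using self-adjointness of $(-\D)^s$, together with $u(-1)=f(-1)$, $\p_t u(-1)=\p_t f(-1)$, $\varphi(1)=\p_t\varphi(1)=0$, and the disjointness of the spatial supports of $\varphi$ (contained in $\overline{B}_1$) and $f$ (contained in $W$), gives the adjoint identity $(P_s^w f,v)_{L^2(B_1\times(-1,1))}=-((-\D)^s\varphi,f)_{L^2(W\times(-1,1))}$, in perfect analogy with \eqref{ps_adjoint}; likewise Lemma~\ref{lem:zero} transfers verbatim, since its proof only uses $u|_{W}=0$ and the $L^2((-1,1),H^s(\R^n))$ bound, which the wave problem also provides, so that $f:=-\eta^2(-\D)^s\hat\varphi$ again lies in $L^2((-1,1),C^\infty_c(W))$. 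Second, Proposition~\ref{prop:small_s} holds for the wave dual \emph{with the same proof}: Step~1 there only uses that $\overline{\varphi}(\cdot,t)$ solves the Caffarelli--Silvestre equation at each fixed $t$, which is unaffected by the equation in $t$, and the only place the temporal equation enters -- the global bound \eqref{eq:global} -- is supplied here by the wave energy estimate above.

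With $\varphi$ now the wave dual solution, the proof of Lemma~\ref{lem:proof1} is unchanged: $\Je$ in \eqref{eq:functional} is strictly convex and continuous (continuity of $v\mapsto\|\eta(-\D)^s\varphi\|_{L^2(W\times(-1,1))}^2$ again via Lemma~\ref{lem:zero}), and coercivity follows as before by extracting a weak limit $\hat\varphi$ from a normalised minimising sequence, noting $\hat\varphi=(-\D)^s\hat\varphi=0$ on $W/2\times(-1,1)$, and invoking weak unique continuation for the fractional Laplacian at a.e.\ time slice \cite[Theorem~1.2]{GSU16} to conclude $\hat\varphi\equiv0$, hence $\hat v\equiv0$. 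The proof of Lemma~\ref{lem:proof2} is likewise unchanged: the Euler--Lagrange inequality \eqref{eq:EL} is derived from minimality exactly as written, and inserting the adjoint identity above turns it into $|\int_{B_1\times(-1,1)}(u-h)v\,dx\,dt|\le\epsilon\|v\|_{L^2(B_1\times(-1,1))}$ for all $v$, which by duality yields $\|u-h\|_{L^2(B_1\times(-1,1))}\le\epsilon$ together with $\|f\|_{L^2(W\times(-1,1))}^2\le-2\Je(\hat v)$.

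For the analogue of Lemma~\ref{lem:proof3} I would, following \cite{FZ00}, introduce the auxiliary functional obtained from \eqref{eq:functional} by halving the $\epsilon$-term and replacing the $h$-coupling by $-\int_{B_1\times(-1,1)}h\,[(\p_t^2+\p_{n+1}^s)\overline{\varphi}](x,\delta,t)\,dx\,dt$, with the same abuse of notation $\p_{n+1}^s\overline{\varphi}(x,\delta,t)=c_s\delta^{1-2s}\p_{n+1}\overline{\varphi}|_{(x,\delta,t)}$ as in Lemma~\ref{lem:proof3}, so that $\Je(v)=\Jes(v)+\tfrac{\epsilon}{2}\|v\|_{L^2(B_1\times(-1,1))}+\int_{B_1\times(-1,1)}h\,(\p_t^2+\p_{n+1}^s)[\overline{\varphi}(x,\delta,t)-\overline{\varphi}(x,0,t)]\,dx\,dt$. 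The estimate for $I_2=\inf\Jes$ is obtained from Proposition~\ref{prop:small_s} (applied with $W/2$, where $\eta=1$) after integrating the $h$-term by parts twice in $t$ to move $\p_t^2$ onto $h$ -- legitimate since $h\in H^2_0(B_1\times(-1,1))$ and $\overline{\varphi}(\cdot,\cdot,1)=\p_t\overline{\varphi}(\cdot,\cdot,1)=0$ -- which, as in Step~1 of Lemma~\ref{lem:proof3} but with $\|\p_t h\|_{L^2}$ replaced by $\|\p_t^2 h\|_{L^2}$, gives $I_2\ge-e^{C(1+|\log(\epsilon\|h\|_{H^2(B_1\times(-1,1))}^{-1})|)/\delta^\sigma}\|h\|_{H^2(B_1\times(-1,1))}^2$. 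To ensure the analogue of \eqref{eq:pos}, I would bound the last term in this decomposition: its $\p_t^2$-part, after two integrations by parts in $t$ and the fundamental theorem of calculus $\overline{\varphi}(x,\delta,t)-\overline{\varphi}(x,0,t)=\int_0^\delta\p_z\overline{\varphi}\,dz$, is $\le C\delta^s\|\p_t^2 h\|_{L^2}\|x_{n+1}^{\frac{1-2s}{2}}\p_{n+1}\overline{\varphi}\|_{L^2(\R^{n+1}_+\times(-1,1))}\le C\delta^s\|\p_t^2 h\|_{L^2}\|v\|_{L^2}$ by the wave energy estimate, while its $\p_{n+1}^s$-part, rewritten via the Caffarelli--Silvestre equation as $-c_s\int_0^\delta z^{1-2s}\D'\overline{\varphi}\,dz$ and integrated by parts in $x$ (using that $h$ vanishes on $\p B_1$), is $\le C\delta^{1-s}\|\nabla' h\|_{L^2}\|v\|_{L^2}$; hence \eqref{eq:pos} holds once $\delta$ is at most a fixed positive power of $\epsilon/(C\|h\|_{H^2(B_1\times(-1,1))}+1)$. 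Choosing $\delta$ to saturate this bound, substituting into the estimate for $I_2$, and using $\|f\|_{L^2(W\times(-1,1))}^2\le-2\min\Je\le-2I_2$, yields $\|f\|_{L^2(W\times(-1,1))}\le Ce^{C(1+\|h\|_{H^2(B_1\times(-1,1))}^\sigma)\epsilon^{-\sigma}}\|h\|_{H^2(B_1\times(-1,1))}$, and Theorem~\ref{prop:cost_1} follows by combining the three lemmata as in the proof of Theorem~\ref{prop:cost}.

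The only genuine new point -- and the reason the regularity hypothesis rises from $h\in H^1_0$ to $h\in H^2_0$ -- is the second time derivative in the dual wave operator: every place where the parabolic argument integrated one $\p_t$ onto $h$ now requires integrating $\p_t^2$, so both the bound for $I_2$ and the verification of \eqref{eq:pos} demand control of $\|\p_t^2 h\|_{L^2}$, and $H^2_0$ is exactly what makes the resulting boundary terms at $t=-1$ (and $t=1$) disappear. One must also check that the wave energy estimate, being uniform in $t$ and hence $L^2$ in $t$, really is strong enough both to feed Proposition~\ref{prop:small_s} (through $\|\varphi\|_{L^2((-1,1),H^s(\R^n))}$) and to bound the two ``difference'' terms above (through $\|x_{n+1}^{\frac{1-2s}{2}}\nabla\overline{\varphi}\|_{L^2(\R^{n+1}_+\times(-1,1))}$); it is, and the remainder of the argument is insensitive to the hyperbolic rather than parabolic nature of the local part.
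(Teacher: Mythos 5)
Your proposal is correct and takes essentially the same route as the paper: the paper only sketches this result, asserting that the arguments for Theorem \ref{prop:cost} carry over once the dual heat equation is replaced by the dual wave equation and the global estimate \eqref{eq:global} by the wave energy estimate, and your write-up is a faithful, detailed execution of exactly that plan. In particular, your identification of the two genuinely new points — the double integration by parts in time (which is what forces $h\in H^2_0$ and makes the boundary terms at $t=\pm 1$ vanish) and the fact that Proposition \ref{prop:small_s} is insensitive to the temporal part of the operator since only the global energy estimate uses the full equation — matches the paper's intended argument.
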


\subsection{Variable coefficient operators}
Last but not least, we emphasize that the described techniques permit us to deal with variable coefficient perturbations of the local and nonlocal parts of the operator (c.f.\ also the recent article \cite{GLX17} for qualitative statements). Here the variable coefficient nonlocal operators can for instance be understood as in  \cite{ST10}, \cite{CS16}. As in \cite{Rue15}, Section 6, and \cite{Rue17}, Section 4, the corresponding estimates carry over to this regime, if the coefficients are suitably regular (c.f.\ Section 6 in \cite{Rue15} or also \cite{Yu16} for weak and strong unique continuation properties of the variable coefficient fractional Laplacian and the associated necessary regularity assumptions on the coefficients).

For simplicity we only discuss the simplest possible extensions. Operators which for instance involve lower order contributions can also be dealt with in this framework. 
Let $L = \p_{n+1} x_{n+1}^{1-2s}\p_{n+1} + x_{n+1}^{1-2s} \p_{i} a^{ij}\p_j$,
where $i,j \in \{1,\dots,n\}$ and $a^{ij}:\R^{n} \rightarrow \R^{n\times n}_{sym}$ is a positive definite, symmetric, Lipschitz continuous matrix field. Then, following \cite{CS07}, \cite{ST10}, \cite{CS16}, we define
\begin{align*}
(-\D_{a^{ij}})^{s}u(x') := \lim\limits_{x_{n+1}\rightarrow 0} x_{n+1}^{1-2s}\p_{n+1}\bar{u}(x',x_{n+1}),
\end{align*}
where $\bar{u}$ solves the equation
\begin{align*}
L \bar{u} &= 0 \mbox{ in } \R^{n+1}_+,\ \bar{u} = u \mbox{ on } \R^{n} \times \{0\}. 
\end{align*}
The operator $(-\D_{a^{ij}})^{s}$ is self-adjoint.
Considering the problem
\begin{equation*}
\begin{split}
(\p_t +(-\D_{a^{ij}})^{s}) u &= 0 \mbox{ in } B_1 \times (-1,1),\\
 u & = f \mbox{ in } (\R^n \setminus \overline{B}_1) \times (-1,1),\\
 u & = f \mbox{ in } \R^n \times \{-1\},
\end{split}
\end{equation*}
recalling (a slight modification of) the well-posedness theory for the mixed Dirichlet-Neumann problem from \cite{KRSIV} and denoting the corresponding Poisson operator by $P_{s,a^{ij}}$, we obtain the direct analogue of Theorem \ref{prop:cost}:

\begin{thm}[Cost of approximation for variable coefficients]
\label{prop:cost_2}
Let $h\in H^1_0(B_1 \times (-1,1))$ and $\epsilon>0$. Let $W \subset \R^n \setminus \overline{B}_1$ be a Lipschitz domain with $\overline{W} \cap \overline{B}_1 = \emptyset$.
Then there exists a control function $f\in L^2(W \times (-1,1))$ such that
\begin{equation*}
\begin{split}
&\|h-P_{s,a^{ij}} f\|_{L^2(B_1 \times (-1,1))} \leq \epsilon,\\ 
&\|f\|_{L^2(W \times (-1,1))}\leq C e^{C (1+\|h\|_{H^1(B_1 \times (-1,1))}^{\sigma})\epsilon^{-\sigma}}\|h\|_{H^1(B_1 \times (-1,1))},
\end{split}
\end{equation*}
where $C > 1$ and $\sigma > 0$ depend on $n$, $s$, $W$, and the Lipschitz norm of $a^{ij}$.
\end{thm}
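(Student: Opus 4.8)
The plan is to mimic the proof of Theorem \ref{prop:cost} essentially verbatim, replacing the parabolic operator $\p_t + (-\D)^s$ by the variable coefficient operator $\p_t + (-\D_{a^{ij}})^s$, and tracking which ingredients survive. The three structural pillars are: existence of a minimizer of the analogue of the functional \eqref{eq:functional} (now with $\varphi$ solving the dual problem for $\p_t + (-\D_{a^{ij}})^s$), the identification of $f := -\eta^2 (-\D_{a^{ij}})^s \hat\varphi$ as a control via minimality, and the quantitative bound on the cost coming from a propagation of smallness estimate. For the first two steps the argument is formally identical: one needs (a) well-posedness of the direct and dual problems, which is supplied by the quoted modification of \cite{KRSIV} for the mixed Dirichlet--Neumann problem, (b) the self-adjointness of $(-\D_{a^{ij}})^s$ (stated in the excerpt), which makes the duality computation analogous to \eqref{eq:HB} go through, and (c) the analogue of Lemma \ref{lem:zero}, i.e.\ that $(-\D_{a^{ij}})^s\hat\varphi$ is smooth and compactly supported in $W$ — this holds because on $W$ the solution $\hat\varphi$ vanishes, so $(-\D_{a^{ij}})^s\hat\varphi$ on $W$ is given by a smoothing integral operator applied to $\hat\varphi$, exactly as in the constant coefficient case. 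Strict convexity and coercivity of the functional reduce, as before, to weak unique continuation, which for the variable coefficient fractional Laplacian is available under the Lipschitz regularity hypothesis on $a^{ij}$ by the results quoted in the variable coefficient subsection (\cite{Rue15}, \cite{Yu16}).

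The quantitative heart of the matter is the variable coefficient analogue of Proposition \ref{prop:small_s}. First I would invoke the Caffarelli--Silvestre type extension adapted to $a^{ij}$ from \cite{ST10}, \cite{CS16}: the extension $\ov$ of $\varphi$ solves $L\ov = \p_{n+1} x_{n+1}^{1-2s}\p_{n+1}\ov + x_{n+1}^{1-2s}\p_i a^{ij}\p_j \ov = 0$ in $\R^{n+1}_+$, a degenerate elliptic equation in the $A_2$-Muckenhoupt class with Lipschitz coefficients. For such equations the interior three balls inequality and the boundary three balls / Lebeau--Robbiano type bulk-boundary interpolation estimate used in Step 1 of the proof of Proposition \ref{prop:small_s} remain valid — this is precisely the content of Sections 4--5 of \cite{RS17} combined with the variable coefficient Carleman estimates of \cite{Rue15}, \cite{Rue17}. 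Chaining $N \sim C|\log\delta|$ balls connecting $W \times [\ell/2,\ell]$ to $B_1 \times [\delta/2,2\delta]$, applying Caccioppoli, a trace estimate, and the global energy estimate from the (modified) well-posedness theory, yields the two estimates of Proposition \ref{prop:small_s} with $\p_{n+1}^s$ replaced by the weighted normal derivative defining $(-\D_{a^{ij}})^s$. For the second estimate (the one on $x_{n+1}^{1-2s}\p_{n+1}\ov$) the duality trick turning $s$ into $1-s$ is more delicate with variable coefficients, but one can instead directly apply the three balls machinery to $\p_{n+1}\ov$, which solves a related degenerate equation, or alternatively absorb this term using Caccioppoli as in the first estimate; either way only $L^2$ control of one derivative of $\ov$ is needed.

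With the propagation of smallness estimate in hand, Step 3 is routine bookkeeping identical to the proof of Lemma \ref{lem:proof3}: one introduces the auxiliary functional $\Jes$, writes $\Je = \Jes + \text{(nonnegative remainder)}$ provided $\delta$ is chosen small relative to $\epsilon/(1+\|h\|_{H^1})$ (the positivity condition \eqref{eq:pos} follows from the fundamental theorem of calculus and the global bound $\|x_{n+1}^{(1-2s)/2}\p_{n+1}\ov\|_{L^2} \leq C\|v\|_{L^2}$, which is part of the extension theory), estimates $I_2$ from below using Young's inequality and the propagation of smallness bound, optimizes in $\delta$, and concludes $\|f\|_{L^2}^2 \leq -2I_2 \leq C e^{C(1+\|h\|_{H^1}^\sigma)\epsilon^{-\sigma}}\|h\|_{H^1}^2$. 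The main obstacle is genuinely the quantitative unique continuation step: one must verify that the three balls inequalities and the bulk-boundary interpolation estimate of \cite{RS17} — originally stated for the constant coefficient weight $x_{n+1}^{1-2s}$ — extend to the operator $L$ with Lipschitz $a^{ij}$; this is where the Lipschitz regularity hypothesis on $a^{ij}$ (and hence its appearance in the constants) is essential, and it is the reason one cites \cite{Rue15}, Section 6, and \cite{Rue17}, Section 4, rather than reproving the Carleman estimates here. Everything else — the variational structure, the duality identities, the smoothing of $(-\D_{a^{ij}})^s$ on $W$, and the final optimization — transfers without change.
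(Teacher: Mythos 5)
Your proposal follows essentially the same route as the paper, which itself only sketches this proof by pointing to the duality identity, the variable coefficient propagation of smallness (three balls plus bulk--boundary interpolation for the degenerate extension), and the variational scheme of Theorem \ref{prop:cost}; your version is in fact more detailed than the paper's. The one point where you hedge --- the $s\mapsto 1-s$ conjugation for $x_{n+1}^{1-2s}\p_{n+1}\ov$ --- actually carries over verbatim here, since $a^{ij}$ depends only on $x'$ and not on $x_{n+1}$, so $\psi:=x_{n+1}^{1-2s}\p_{n+1}\ov$ still solves the conjugate degenerate equation with exponent $\tilde{s}=1-s$ and the same tangential part.
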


\begin{proof}
We only give a sketch of the argument, as there are no major changes with respect to the proof of Theorem \ref{prop:cost}.
For the qualitative approximation property, it suffices to note that the crucial identity
\begin{align*}
(v,P_{s,a^{ij}}f)_{L^2(B_1 \times (-1,1))} = - ((-\D_{a^{ij}})^s \varphi, f)_{L^2(\R^n)}
\end{align*}
remains valid. This can for instance be inferred by the extension definition of the operator.

Next we note that the quantitative propagation of smallness result which is based on three balls and boundary-bulk interpolation arguments is also true in this set-up. This then allows to argue variationally as previously. Here we consider the functional
\begin{align}
\label{eq:functional_v}
\Je(v) = \frac{1}{2}\int\limits_{W \times (-1,1)}|\eta(-\D_{a^{ij}})^{s} \varphi|^2 \,dx + \epsilon \|v\|_{L^2(B_1 \times (-1,1))} - \int\limits_{B_1 \times (-1,1)} h v \,dx \,dt,
\end{align}
where $\varphi$ and $v$ are related through
\begin{equation}
\label{eq:eq_main_adj}
\begin{split}
(-\p_t +(-\D_{a^{ij}})^{s}) \varphi &= v \mbox{ in } B_1 \times (-1,1),\\
 \varphi & = 0\mbox{ in } (\R^n \setminus \overline{B}_1) \times (-1,1) ,\\
 \varphi & = 0 \mbox{ in } \R^n \times \{1\}.
\end{split}
\end{equation}
This then concludes the argument.
\end{proof}

\bibliographystyle{alpha}
\bibliography{citations_frac}

\end{document}